\theoremstyle{plain}
\newtheorem{theorem}{Theorem}
\newtheorem{lemma}[theorem]{Lemma}
\newtheorem{corollary}[theorem]{Corollary}
\newtheorem{proposition}[theorem]{Proposition}
\theoremstyle{definition}
\newtheorem{definition}{Definition}
\theoremstyle{remark}
\newtheorem{remark}{Remark}
\newcommand{\vertiii}[1]{{\left\vert\kern-0.25ex\left\vert\kern-0.25ex\left\vert #1 
        \right\vert\kern-0.25ex\right\vert\kern-0.25ex\right\vert}}
\begin{document}

\title{Quasi-compactness for dominated kernels with application to quasi-stationary distribution theory}

\author{D. Villemonais$^{1,2}$}
\footnotetext[1]{IRMA, Université de Strasbourg, France}
\footnotetext[2]{Institut Universitaire de France\\ \texttt{denis.villemonais@unistra.fr}}

\maketitle

\begin{abstract}
	We establish a domination principle for positive operators, which provides an upper bound on the essential spectral radius and yields quasi‑com\-pactness criteria on weighted supremum spaces with Lyapunov type functions and local domination.
In particular, for kernels acting on such spaces,  we obtain $r_{ess}(P)\leq r_{ess}(Q)$
whenever 
$0\leq P\leq Q$ as kernels, a property that is known to fail in general on $L^p$ spaces, $p<+\infty$.

We then describe the asymptotics of iterates of positive quasi‑compact kernels, showing convergence, after suitable renormalization, towards a finite decomposition over eigenelements, and we study the long‑time behaviour of quasi‑compact continuous‑time semigroups. For the latter, we prove that measurability in time and quasi‑compactness at a single positive time imply quasi‑compactness at all times, exclude periodic behaviour, and entail convergence to eigenelements as time goes to infinity.

Finally, we apply these results to absorbed Markov processes and qua\-si‑stationary distributions. In this setting, the domination and Lyapunov criteria allow one to work in reducible situations and to relax classical regularity assumptions, for instance replacing strong Feller conditions by domination from a regular kernel or by locally uniformly integrable densities on suitable weighted supremum spaces.
\end{abstract}

\textit{Keywords: } quasi-compact operators; positive kernels; weighted supremum spaces; lyapunov type criteria; quasi-stationary distributions; strong Feller semigroups.

\textit{2020 Mathematics Subject Classification: } 47B65; 47D07; 60J35; 60J25.

\section{Introduction}	
\label{sec:intro}

Positive operators on Banach lattices form a unifying framework for Markov kernels, transfer operators, and many integral operators arising in analysis and dynamics.  Classical domination results~\cite{DoddsFremlin1979,Pitt1979,AliprantisBurkinshaw2006} show that compactness propagates under order domination, but the analogous question for the essential spectral radius (which measures the  `non‑compact' part of the operator) is delicate: on general Banach lattices, domination does not control the essential spectral radius (as shown by existing counter-examples in $L^2$ spaces). While some partial results exist for specific spaces, there is, to date, no simple and flexible domination principle for quasi‑compactness in the natural $L^\infty$ setting of sub Markov processes and positive kernels.

The first goal of this paper is to provide such a principle: we establish a new criteria for quasi-compactness based on domination conditions that are both easy to verify and particularly suited to the $L^\infty$ setting. Building on this, we develop Lyapunov‑type criteria for quasi‑compactness that only require local domination by compact kernels, and we derive general asymptotic decompositions for quasi‑compact kernels and continuous‑time semigroups, including propagation of quasi‑compactness from a single time, exclusion of periodic behaviour, and convergence to eigenelements.

The second goal is to show that this operator‑theoretic framework yields a unified and flexible theory for the study of quasi‑stationary distributions for absorbed Markov processes. Working on weighted 
$L^\infty$ spaces, we recover and extend existing quasi‑compactness type results based on strong Feller regularity and global Lyapunov conditions, replacing them by local domination hypotheses and locally uniformly integrable densities. In particular, we obtain existence and convergence to quasi‑stationary distributions for sub‑Markov processes on general state spaces, under assumptions that contains, as special cases, the strongly Feller Lyapunov frameworks developed previously for elliptic and hypoelliptic diffusions (including settings treated in~\cite{FerreRoussetEtAl2018,GuillinNectouxEtAl2024,CastroLambEtAl2021,BenaimEtAl2022}). 

\medskip

The study of compactness/quasi-compactness of a non-negative operator on a Banach lattice through domination properties has been initiated by the remarkable results obtained in \cite[Theorem~4.5]{DoddsFremlin1979}:
 it is proved therein that if a Banach lattice $B$ is norm continuous and if $P,Q$ are two  operators on $B$ with $0\leq P\leq Q$ and $Q$ compact, then $P$ is itself a compact operator (we also refer the interested reader to the contemporary article~\cite{Pitt1979}, where the author proves by simpler means an important special case). The extension of this domination condition for quasi-compactness has been studied since then, through the following problem: in general, if $0\leq P\leq Q$, is it true that the essential spectral radius of $P$ is smaller than the essential spectral radius of $Q$?
This is false in general: in~\cite{PagterShep1988}, the authors provide a counterexample in a  $L^2$ space. Nonetheless, quasi-compactness of dominated non-negative operators has been obtained under particular conditions on $B$ or $Q$. More precisely,   in~\cite{Caselles1987}, the author shows that if $B$ is a pre-dual function space and $Q$ is quasi-compact with essential spectral radius $0$, then this is also the case for $P$; in~\cite{Wu2004}, the author shows that if $Q$ satisfies a suitable regularity property and $B$ is the set of bounded measurable functions, then the essential spectral radius of $P$ is upper bounded by that of $Q$; several properties entailed by similar domination assumptions are also provided in~\cite{Martinez1993,Troitsky2004}.

Our first main result is that the quasi-compactness of $P$ can be derived from domination properties by the sum of two positive operators. Assuming that $0\leq P\leq K+S$, where $K$ is a non-negative compact operator and $S$ a non-negative operator, we show that $r_{ess}(P)\leq r(S)$, where $r_{ess}(P)$ denotes the essential spectral radius of $P$, as soon as the positive part $(P-S)_+$ of $P-S$ is well defined.
 Motivated by applications to probability theory, we focus on the setting where~$B$ is a weighted function space over a measurable space $(E,\mathcal E)$ which is countably generated, and  where $P$ and $Q$ are kernels on~$E$.  In this case, we show that  $0\leq P\leq Q$ does imply that $r_{ess}(P)\leq r_{ess}(Q)$. From this general result, we derive Lyapunov type criteria for quasi-compactness, with applications to positive kernels with densities and to positive kernels locally dominated by compact kernels. In particular, this allows us to extend the  results of~\cite{FerreRoussetEtAl2018,BenaimEtAl2022} from strong Feller kernel to kernels dominated by strong Feller kernels.

Our second main result concerns the analysis of the asymptotic behaviour of the iterates of quasi-compact kernels in an infinite norm setting. 
We show that the quasi-compactness of  a  kernel entails that its iterates converges, after proper renormalization, to a decomposition over eigenelements, extending results from~\cite{YosidaKakutani1941,Revuz2005,Hennion2007} to the non-conservative setting.
We also consider  the limiting behaviour of quasi-compact continuous time semigroups: we prove that measurability (in time) of the semigroup and quasi-compactness at one positive time is enough to deduce the quasi-compactness of the semigroup at all times $t$, the aperiodicity of the process, and convergence to eigenelements as $t\in[0,+\infty)$ tends to infinity.

We conclude the paper by applying the previous results to the study of existence and convergence to quasi-stationary distributions for absorbed Markov processes, which amounts to describe the convergence of the law of sub-Markov processes conditioned not to be absorbed when observed (see e.g.~\cite{ColletEtAl2012,Meleard2012}).
Among the several approaches that have been devised to study quasi-stationary distributions, the study of the spectral properties has been particularly successful for diffusion processes~\cite{LladserSanMartin2000,SteinsaltzEvans2007,CattiauxColletEtAl2009,CattiauxMeleard2010,LittinC.2012,KolbSteinsaltz2012,HeningKolb2019,HeningEtAl2021} and for birth and death processes~\cite{Doorn1991}, often through arguments relying on the self-adjoint properties of the infinitesimal generator or of an auxiliary generator (see also~\cite{ChampagnatVillemonais2017a} for a probabilistic approach to the study of these models and~\cite{SanchezEtAl2023} with in-depth considerations and practical criteria for Krein-Rutman type results). Recently, some authors focused specifically on the quasi-compactness of the transition kernel operator, using regularity assumptions such as strong Feller regularity, see e.g.~\cite{FerreRoussetEtAl2018,HinrichsKolbEtAl2018,CastroLambEtAl2021,BenaimEtAl2022,GuillinNectouxEtAl2024}. Using the quasi-compactness criteria and their consequences developed in this paper, we derive new flexible tools for the study of quasi-stationary distributions, which in particular extend the main results of~\cite{FerreRoussetEtAl2018,CastroLambEtAl2021,BenaimEtAl2022,GuillinNectouxEtAl2024}, replacing conditions on the (strong Feller) regularity of the process transition kernel by its domination by a regular kernel. We also derive criteria based on the existence of a locally uniformly integrable density.

\medskip We state and prove the quasi-compactness by domination criterion in Section~\ref{sec:QCdom} and provide Lyapunov type criteria in Section~\ref{sec:Lyap}.
 In Section~\ref{sec:QSDbyQC}, we focus on the asymptotic of iterates of positive quasi-compact kernels and on the long-time behaviour of quasi-compact continuous time semigroups. We conclude the paper in Section~\ref{sec:applis} with applications to the study of quasi-stationary distributions of sub-Markov processes.

\section{Quasi-compactness by domination arguments}
\label{sec:QCdom}

This section establishes a general domination principle controlling the essential spectral radius of positive operators on Banach lattices.

Let $(B,\|\cdot\|)$ be a Banach lattice (we refer the reader to~\cite[Chapter~4]{AliprantisBurkinshaw2006} for references and first properties of Banach lattices), with order $\geq$. A linear operator $Q$ on $B$ is called \textit{positive} if $Qf\geq 0$ for all  $f\in B$ such that $f\geq 0$. Given two linear operators $P$ and $Q$ on $B$, we say that $0\leq P\leq Q$ if $0\leq Pf\leq Qf$ for all element $f\in B$ such that $f\geq 0$. Given a bounded operator $Q$ on $B$, we denote by $\VERT Q\VERT$ the operator norm of $Q$ and by $r(Q)$ its spectral radius (in particular, positive linear operators on a Banach lattice are bounded, according to \cite[Theorem~12.3]{AliprantisBurkinshaw2006}).

The \emph{essential spectral radius} of an operator $Q$ on $B$ is  defined by
\[
r_{\mathrm{ess}}(Q) := \lim_{n \to \infty} \inf \left\{ \VERT Q^n - K \VERT^{1/n} : K \in \mathcal{K}(B) \right\},
\]
where \( \mathcal{K}(B) \) denotes the set of compact operators on \( B \) (we refer the reader  to~\cite[Chapter~5]{AliprantisBurkinshaw2006} for  definitions and properties of compact operators on Banach lattices). This definition captures the asymptotic  of the "non-compact part" of \( Q \) under iteration and provides a quantitative measure of how closely the powers of \( Q \) can be approximated by compact (or finite rank) operators. The essential spectral radius can also be defined as the common radius of the different essential spectra of $Q$, as detailed in~\cite[Section~1.4]{EdmundsEvans2018}.

An operator $Q$ on $B$ is said to be \textit{quasi-compact} if $r_{\mathrm{ess}}(Q)<r(Q)$, where $r(Q)$ is the spectral radius of $Q$. 

\begin{remark}
	Following another standard definition, \( r_{\mathrm{ess}}(Q) \) can be defined as the infimum of those \( \rho \geq 0 \) such that \( B \) admits a decomposition
	\[
	B = F_\rho \oplus H_\rho,
	\]
	into closed, \( Q \)-invariant subspaces with \( \dim F_\rho < \infty \), all eigenvalues of \( Q_{\rvert F_\rho} \) having modulus \( \geq \rho \), and \( r(Q_{\rvert H_\rho}) < \rho \) (see e.g.~\cite{HennionHerve2001}). The bounded linear operator \( Q \) is thus quasi-compact if there exists a decomposition
	\[
	B = F \oplus H,
	\]
	into closed, \( Q \)-invariant subspaces, where \( \dim F < \infty \), all eigenvalues of \( Q_{\rvert F} \) have modulus equal to the spectral radius \( r(Q) \), and the restriction \( Q_{\rvert H} \) satisfies \( r(Q_{\rvert H}) < r(Q) \).
		The link between this alternative definition and the definition we give in terms of compact approximation is a by-product of the properties established in the first chapter of~\cite{Ruston2004}. 
\end{remark}

Finally,  given a bounded linear operator $Q$ on $B$, we say that  the \textit{positive part} of $Q$, denoted $Q_+$, is well defined if, for all $f\geq 0$,
\[
Q_+f:=\sup\{Qg,\ g\in B,\,0\leq g\leq f\}
\]
defines a positive linear operator on $B$. In particular, $Q_+$ is well defined if $B$ is Dedekind complete (see e.g.~\cite{AliprantisBurkinshaw2006} page 13), which is the case for $L^p(\mu)$ spaces, $1\leq p\leq \infty$, on a countably generated measured space $(E,\mathcal E,\mu)$ with $\mu$ $\sigma$-finite (see e.g. Example 2.52 p.48 in~\cite{BanasiakArlotti2006}).

\begin{theorem}
    \label{thm:QCbyDOM} 
    Let $P$ be a positive operator on $B$ and assume that there exists two positive operators $K$ and $S$ such that $K$ is compact, $(P-S)_+$ is well defined and 
    $0\leq P\leq K+S$. Then $r_{ess}(P)\leq r(S)$.
\end{theorem}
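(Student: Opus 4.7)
The idea is to split $P$ according to the hypothesis and then bound the essential spectral radius by a Fredholm factorisation that relies on the Aliprantis--Burkinshaw cube theorem. Set $R:=(P-S)_+$, which is well defined by assumption, and $T:=P-R$. Using $P-S\leq K$ and the defining supremum, one checks that $Rf=\sup\{(P-S)g:0\leq g\leq f\}\leq Kf$ for $f\geq 0$, so $0\leq R\leq K$. Taking $g=f$ in the supremum gives $R\geq P-S$, whence $T\leq S$; and $Rf\leq Pf$ (from $P-S\leq P$) gives $T\geq 0$. Since $0\leq T\leq S$ on a Banach lattice, the classical monotonicity of the spectral radius under dominated positive operators yields $r(T)\leq r(S)$, so it suffices to prove $r_{ess}(P)\leq r(T)$.

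For any real $\lambda>r(T)$, the resolvent
$$U_\lambda:=(\lambda I-T)^{-1}=\sum_{n\geq 0}\lambda^{-n-1}T^n$$
exists, is a bounded positive operator, and yields the factorisation $\lambda I-P=(\lambda I-T)(I-U_\lambda R)$. The key observation is that $0\leq U_\lambda R\leq U_\lambda K$ with $U_\lambda K$ compact (bounded composed with compact); the Aliprantis--Burkinshaw cube theorem (which holds on any Banach lattice) then gives $(U_\lambda R)^3$ compact, so $(U_\lambda R)^{3k}$ is compact for all $k\geq 1$ and hence $r_{ess}(U_\lambda R)=0$. Consequently $I-U_\lambda R$ is Fredholm of index zero, $\lambda I-P$ is Fredholm as a product of two Fredholm operators, and therefore no real $\lambda>r(T)$ belongs to the Fredholm essential spectrum of $P$.

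To pass from this real-axis statement to $r_{ess}(P)\leq r(T)$, I would invoke the Krein--Rutman-type peripheral-spectrum result stating that, for a positive operator on a Banach lattice, the essential spectral radius belongs to the essential spectrum. Since $r_{ess}(P)$ is then a real non-negative element of $\sigma_{ess}(P)$ that cannot lie in $(r(T),\infty)$ by the previous step, this forces $r_{ess}(P)\leq r(T)\leq r(S)$.

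The step I expect to be the main obstacle is this final passage from ``real $\lambda>r(T)$ is Fredholm-regular'' to a full bound on the essential spectral radius: one has to invoke the peripheral-essential-spectrum result for positive operators in a form that applies on general Banach lattices. An alternative route, which I would keep in reserve, is to extend the Fredholm factorisation to every complex $\lambda$ with $|\lambda|>r(S)$ by using the Neumann-series modulus bound $|U_\lambda f|\leq U_{|\lambda|}|f|$, iterated into $|(U_\lambda R)^n f|\leq (U_{|\lambda|}R)^n|f|$, together with the cube theorem applied to $U_{|\lambda|}R\leq U_{|\lambda|}K$; converting that modulus estimate into genuine compactness of some power of $U_\lambda R$ is the delicate point in that variant.
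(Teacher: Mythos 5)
Your proposal is correct in substance but follows a genuinely different route from the paper's. Both arguments start from the same decomposition $P=R+T$ with $R=(P-S)_+\leq K$ and $0\leq T=P-R\leq S$, and both ultimately rest on the Aliprantis--Burkinshaw cube theorem. The paper then proceeds combinatorially: it expands $P^n=(R+T)^n$, observes that every term containing at least three factors of $R$ is compact, bounds the norm of the remaining $O(n^2)$ terms by $n^2\|R\|^2\|S\|^{n-2}$ to get $r_{ess}(P)\leq\|S\|$ directly from the compact-approximation definition of $r_{ess}$, and finally bootstraps from $\|S\|$ to $r(S)$ by applying the same argument to $0\leq P^n\leq S^n+K'$. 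Your route instead uses the resolvent factorisation $\lambda I-P=(\lambda I-T)(I-U_\lambda R)$ together with Fredholm/Riesz theory; it reaches $r(T)\leq r(S)$ in one step, which is arguably cleaner, but it additionally requires identifying the compact-approximation essential spectral radius with the radius of the Fredholm essential spectrum (Atkinson/Nussbaum), an identification the paper's elementary argument never needs.

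The weak point is exactly the one you flag: passing from ``every real $\lambda>r(T)$ is a Fredholm point'' to $r_{ess}(P)\leq r(T)$. The assertion that $r_{ess}(P)\in\sigma_{ess}(P)$ for a positive operator on an \emph{arbitrary} Banach lattice is a Pringsheim-type statement in the Calkin algebra, whose quotient cone need not be closed or normal; this is a delicate result that cannot simply be invoked in this generality, so the main route as written has a genuine gap. However, your reserve route closes it and should be promoted to the main argument: for complex $\lambda$ with $|\lambda|>r(T)$ one has $|U_\lambda Rf|\leq U_{|\lambda|}|Rf|\leq U_{|\lambda|}R|f|\leq U_{|\lambda|}K|f|$, so each factor $U_\lambda R$ is dominated, in the modulus sense, by the compact positive operator $U_{|\lambda|}K$. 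The Aliprantis--Burkinshaw result the paper itself uses (Theorem 5.14 of their book) is stated precisely for modulus-dominated, not necessarily positive, operators, so $(U_\lambda R)^3$ is genuinely compact, $U_\lambda R$ is a Riesz operator, $I-U_\lambda R$ is Fredholm of index zero, and the entire annulus $|\lambda|>r(T)$ consists of Fredholm points. This yields $r_{ess}(P)\leq r(T)\leq r(S)$ with no appeal to peripheral essential spectral theory.
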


Theorem~\ref{thm:QCbyDOM} immediately entails the following corollary.

\begin{corollary}
	If, in addition to the assumptions of Theorem~\ref{thm:QCbyDOM}, we have $r(S)<r(P)$, then $P$ is quasi-compact.
\end{corollary}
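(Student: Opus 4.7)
The plan is to split $P$ into a compact-dominated part and an $S$-dominated part, and then control the essential norm of $P^n$ by a telescoping expansion combined with an Aliprantis--Burkinshaw-type compactness by domination.

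First, I would set $P_1 := (P-S)_+$ (well defined by hypothesis) and $P_2 := P - P_1$. The inequality $P - S \leq K$ together with the minimality of the positive part yields $0 \leq P_1 \leq K$. For $P_2$, applied to $f \geq 0$, the supremum defining $P_1 f = \sup\{(P-S)g : 0 \leq g \leq f\}$ lies between $(P-S)f$ (taking $g = f$) and $Pf$ (since $(P-S)g \leq Pg \leq Pf$ for $0 \leq g \leq f$); hence $0 \leq P_2 \leq S$. Composing positive dominations gives $P_2^n \leq S^n$ by induction, whence $\|P_2^n\| \leq \|S^n\|$ and $\limsup_n \|P_2^n\|^{1/n} \leq r(S)$.

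Next, I would telescope
\[
P^n - P_2^n \;=\; \sum_{k=0}^{n-1} P_2^k\, P_1\, P^{n-1-k},
\]
and try to show that the right-hand side lives, up to error decaying no slower than $\|S^n\|$, inside the compact ideal. Writing $\beta(T) = \operatorname{dist}(T, \mathcal K(B))$ for the essential norm and using subadditivity together with $\beta(AB) \leq \min(\|A\|\beta(B),\,\beta(A)\|B\|)$, the bound $\beta(P^n) \leq \|P_2^n\| + \sum_{k=0}^{n-1} \|P_2^k\|\, \beta(P_1)\, \|P\|^{n-1-k}$ would close the argument provided $\beta(P_1) = 0$, yielding $r_{ess}(P) = \lim_n \beta(P^n)^{1/n} \leq r(S)$.

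The main obstacle is precisely that, on a general Banach lattice, the order bound $0 \leq P_1 \leq K$ with $K$ compact does \emph{not} force $P_1$ to be compact; this is the phenomenon underlying the $L^2$ counterexample of~\cite{PagterShep1988}. I would overcome this via the Aliprantis--Burkinshaw cube theorem, which guarantees that $P_1^3$ is compact in any Banach lattice, hence $r_{ess}(P_1) = 0$ through the essential spectral mapping theorem. Concretely, one passes to iterates: from $0 \leq P^m \leq (K+S)^m = S^m + D_m$ with $D_m$ a finite sum of products each containing at least one $K$-factor (hence compact), Dedekind completeness provides a Riesz-type decomposition $P^m = A_m + B_m$ with $0 \leq A_m \leq S^m$ and $0 \leq B_m \leq D_m$; the cube theorem applied to $B_m$ gives $r_{ess}(B_m) = 0$, and the bound $\beta(P^m)^{1/m} \leq r(S) + \varepsilon$ for $m$ large follows by matching the exponential rate of this compact approximation with $\|S^m\|^{1/m} \to r(S)$. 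The core technical difficulty is precisely this matching, since $\beta$ is subadditive but not submultiplicative on positive operators in a useful way.
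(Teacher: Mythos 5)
The corollary itself is a one-line consequence of Theorem~\ref{thm:QCbyDOM}: since its assumptions are in force, the theorem gives $r_{ess}(P)\leq r(S)$, and together with the extra hypothesis $r(S)<r(P)$ this yields $r_{ess}(P)<r(P)$, which is precisely the definition of quasi-compactness. That is all the paper's proof consists of. You instead set out to reprove the inequality $r_{ess}(P)\leq r(S)$ from scratch; that inequality is the content of Theorem~\ref{thm:QCbyDOM}, not of the corollary, so your route is unnecessarily long even where it works.

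Taken as a proof of that inequality, your argument has a genuine gap, which you yourself flag at the end. The splitting $P=P_1+P_2$ with $P_1=(P-S)_+$, $0\leq P_1\leq K$ and $0\leq P_2\leq S$ is correct and is exactly the paper's starting point. But the telescoping identity $P^n-P_2^n=\sum_{k}P_2^kP_1P^{n-1-k}$ isolates only \emph{one} factor of $P_1$ in each summand, whereas the Aliprantis--Burkinshaw cube theorem needs \emph{three} dominated factors to produce compactness; so $\beta(P_1)=0$ is not available and the sum cannot be pushed into the compact ideal. Likewise, knowing that $P_1^3$ (or $B_m^3$) is compact, hence $r_{ess}(B_m)=0$, gives no bound on the essential \emph{norm} $\beta(B_m)$, which is what your final estimate would require. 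The way to close the argument --- and what the paper does --- is to expand $(P_1+P_2)^n$ multinomially and sort the terms by the number of occurrences of $P_1$: every term containing at least three factors $P_1$ is compact by the cube theorem (Lemma~\ref{lem:compactproduct}), and the remaining terms, with at most two occurrences, have norm at most $\|S\|^n+n\|P_1\|\,\|S\|^{n-1}+\tfrac{n(n-1)}{2}\|P_1\|^2\|S\|^{n-2}$, which gives $r_{ess}(P)\leq\|S\|$; one then applies this to $P^n\leq S^n+K'$ and uses Gelfand's formula to upgrade $\|S\|$ to $r(S)$. In short: the corollary needs none of this machinery, and the part of Theorem~\ref{thm:QCbyDOM} you do attempt to reprove is left open at its decisive combinatorial step.
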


\begin{proof}[Proof of Theorem~\ref{thm:QCbyDOM}]
	We deduce from $0\leq P\leq S+K$ that
	\begin{align*}
		0\leq (P-S)_+ \leq K,
	\end{align*}
	where $(P-S)_+$ is a positive kernel defined by
	\begin{align*}
		\delta_x (P-S)_+ = (\delta_x P-\delta_x S)_+,\ \forall x\in E.
	\end{align*}
	We also have
	\begin{align*}
		P=(P-S)_+ +S\wedge P,
	\end{align*}
	where $0\leq S\wedge P\leq S$, so that $\|S\wedge P\|\leq \|S\|$.

	We deduce that, for all $n\geq 0$,
	\begin{align*}
		P^n = S^n +\sum_{i=1}^n S^{i-1} (P-S)_+ S^{n-i} + \sum_{1\leq i<j\leq n} S^{i-1} (P-S)_+ S^{j-i} (P-S)_+ S^{n-i-j}+ K_n,
	\end{align*}
	where $K_n$ is a finite sum of products of operators where $(P-S)_+$ appears at least three times. According to the following lemma, we deduce that $K_n$ is compact.
	
	\begin{lemma}
		\label{lem:compactproduct}
		Let $S_1,S_2,S_3,S_4,Q_1,Q_2,Q_3$ be non-negative bounded operators and $K_1,K_2,K_3$ be non-negative compact operators such that $0\leq Q_i\leq K_i$ for all $i\in\{1,2,3\}$. Then the operator
		\begin{align*}
			S_1 Q_1 S_2 Q_2 S_3 Q_3 S_4
		\end{align*}
		is compact.
	\end{lemma}
	
	\begin{proof}[Proof of Lemma~\ref{lem:compactproduct}]
		We observe that $0\leq S_1Q_1S_2\leq S_1K_1 S_2$, $0\leq Q_2S_3\leq K_2 S_3$ and $0\leq Q_3S_4\leq K_3S_4 $, where the operators $S_1K_1 S_2$, $K_2 S_3$ and $K_3S_4$ are compact. Hence, according to \cite[Theorem 5.14]{AliprantisBurkinshaw2006}, the product of the three operators is a compact operator.
	\end{proof}
	
	Considering the first three terms in the above decomposition, we get that
	\begin{align*}
		\left\| P^n-K_n\right\|\leq \|S\|^n+n \|(P-S)_+\|\,\|S\|^{n-1}+\frac{n(n-1)}{2}  \|(P-S)_+\|^2\,\|S\|^{n-2}.
	\end{align*}
	We deduce that
	\begin{align*}
		r_{ess}(P)&=\left(r_{ess}(P^n)\right)^{1/n}\leq \|S\|^{1-2/n} \left(\|S\|^2+ \|(P-S)_+\|\,\|S\|+ \|(P-S)_+\|^2\right)^{1/n}\\
		&\xrightarrow[n\to+\infty]{} \|S\|.
	\end{align*}
	We thus proved that $r_{ess}(P) \leq \|S\|$.

	We conclude the proof by proving that this inequality implies that $r_{ess}(P)\leq r(S)$. Indeed, we get from $0\leq P\leq S+K$ that, for all $n\geq 1$,
	\begin{align*}
		0\leq P^n \leq S^n + K',
	\end{align*}
	with $K'$ given by the sum of products of bounded operators where the compact operator $K$ appears at least once. This entails that $K'$ is a compact operator and hence, according to the first part of the proof, that $r_{ess}(P^n)\leq \|S^n\|$ and hence that 
	\begin{align*}
		r_{ess}(P)=r_{ess}(P^n)^{1/n}\leq \|S^n\|^{1/n}\xrightarrow[n\to+\infty]{} r(S),
	\end{align*}
	where we used  Gelfand formula for the last inequality. This concludes the proof of Theorem~\ref{thm:QCbyDOM}.
\end{proof}

We specialize now our results to the case of weighted spaces of measurable functions. 
More precisely, considering  a  countably generated measurable space $(E,\mathcal E)$  and a measurable function $V:E\to (0,+\infty)$, we define the function space
\[
L^\infty(V):=\left\{f:E\to\mathbb C,\,\exists C>0,\text{ such that }|f(x)|\leq C\,V(x),\ \forall x\in E\right\},
\]
endowed with the  weighted supremum norm $\|f\|_V=\sup_{x\in E}\frac{|f(x)|}{V(x)}$. We say that a functional $R:E\times\mathcal E\to\mathbb R$ is a  kernel if, for all $x\in E$, $R(x,\cdot)$ is a $\sigma$-finite measure, and, for all $A\in \mathcal E$, $R(\cdot,A)$ is measurable. A kernel such that $\|R(\cdot,V)\|_V<\infty$ is called a kernel from $E$ to $L^\infty(V)$ and it defines a linear operator on $L^\infty(V)$. In the following result, $r_{ess}(R)$ is then the essential spectral radius of the induced linear operator.

 We obtain the following corollary to Theorem~\ref{thm:QCbyDOM}, which extends~\cite[Corollary 3.7]{Wu2004}, by removing the regularity condition on $P$.

\begin{corollary}
	\label{cor:BbE}
	Let $P$ and $Q$ be kernels from $E$ to $L^\infty(V)$ such that $0\leq P\leq Q$. Then $r_{ess}(P)\leq r_{ess}(Q)$.  
\end{corollary}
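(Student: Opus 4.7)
My plan is to apply Theorem~\ref{thm:QCbyDOM} to an iterate $P^n$, after producing from an arbitrary compact approximation of $Q^n$ a \emph{positive} compact approximation, exploiting the AM-space structure of $L^\infty(V)$.

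Concretely, I would fix $\rho > r_{ess}(Q)$ and, by the definition of the essential spectral radius, select $n \geq 1$ together with a compact (not necessarily positive) operator $\tilde K$ on $L^\infty(V)$ such that $\|Q^n - \tilde K\| < \rho^n$. Since $\|f\|_V = \inf\{\lambda > 0 : |f| \leq \lambda V\}$, the space $L^\infty(V)$ is an AM-space with order unit $V$; together with Dedekind completeness (recorded in the paper for countably generated $(E,\mathcal E)$) this makes it lattice-isomorphic to a $C(\mathcal S)$-space on a Stonean $\mathcal S$, in which case a classical result (see e.g.~\cite{AliprantisBurkinshaw2006}) yields that the modulus of a compact operator is compact. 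Hence $\tilde K_+ = \tfrac{1}{2}(|\tilde K| + \tilde K)$ is a positive compact operator.

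I would then set $K := \tilde K_+$ and $S := (Q^n - K)_+$; both are positive, $S$ is well-defined by Dedekind completeness, and $Q^n \leq K + S$ by construction. Monotonicity of the positive part together with $\tilde K_+ \geq \tilde K$ gives $S \leq (Q^n - \tilde K)_+$, and the elementary estimate $\|A_+\| \leq \|A\|$ on $L^\infty(V)$ (which follows from $|f_+| \leq |f|$ pointwise) then yields $\|S\| < \rho^n$. Since $0 \leq P^n \leq Q^n \leq K + S$ with $(P^n - S)_+$ automatically well-defined in the Dedekind complete space, Theorem~\ref{thm:QCbyDOM} applies to $P^n$ and delivers $r_{ess}(P^n) \leq r(S) \leq \|S\| < \rho^n$, so $r_{ess}(P) = r_{ess}(P^n)^{1/n} < \rho$. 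Letting $\rho \downarrow r_{ess}(Q)$ concludes.

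The main obstacle will be the compactness of $\tilde K_+$: in general Banach lattices the modulus of a compact operator may fail to be compact (as illustrated by the $L^2$ counterexamples mentioned in the introduction), so the AM-with-unit structure of $L^\infty(V)$ is really essential at exactly this step. Everything else is a direct reduction to Theorem~\ref{thm:QCbyDOM} together with the standard spectral mapping identity $r_{ess}(P^n) = r_{ess}(P)^n$.
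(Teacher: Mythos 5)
Your overall strategy coincides with the paper's: reduce to Theorem~\ref{thm:QCbyDOM} by dominating $Q^n$ by a positive compact operator plus a positive operator of norm at most $\rho^n$. The difference is in how the positive compact part is produced, and that is exactly where the gap sits. You invoke Dedekind completeness of $L^\infty(V)$ three times: to represent it as $C(\mathcal S)$ with $\mathcal S$ Stonean (hence compactness of $|\tilde K|$), to define $S=(Q^n-K)_+$, and to declare $(P^n-S)_+$ ``automatically well-defined''. But $L^\infty(V)$ here is a space of \emph{measurable} functions with the pointwise order and no a.e.\ identification (it has to be, for kernels to act on it), and such a space is only Dedekind $\sigma$-complete: an uncountable bounded family of measurable functions need not admit a measurable least upper bound. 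The Dedekind-complete examples recorded in Section~\ref{sec:QCdom} are the $L^p(\mu)$ spaces, not $L^\infty(V)$. The first of your three uses can be repaired: $L^\infty(V)$ is an AM-space with unit $V$, hence lattice isometric to some $C(K)$ by Kakutani's theorem, and Krengel's theorem (the modulus of a compact operator with values in an AM-space with unit exists and is compact) needs no Stonean or Dedekind hypothesis. The other two cannot be waved away: $S=(Q^n-\tilde K_+)_+$ is the positive part of a non-compact regular operator that is not a kernel, and neither its existence nor that of $(P^n-S)_+$ follows from anything you have established. (A smaller point: $\|A_+\|\le\|A\|$ holds because the unit ball of $L^\infty(V)$ is the order interval $[-V,V]$, not because $|f_+|\le|f|$ pointwise.)

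The paper's proof is engineered precisely to avoid these existence questions. For the compact part it takes a \emph{finite-rank} approximant $K=\sum_i h_i\otimes a_i$ and dominates it termwise by the positive finite-rank operator $\bar K=\sum_i h_{i,+}\otimes|a_i|$, so only positive parts of scalar functionals are needed, and these always exist; no modulus of a general compact operator is ever taken. For the positive parts of \emph{kernels} appearing in Theorem~\ref{thm:QCbyDOM}, it works row by row via the Hahn decomposition of $\delta_xP-\delta_xS$, with measurability in $x$ supplied by~\cite{DubinsFreedman1964} under the countably-generated hypothesis on $\mathcal E$ --- that hypothesis enters through measurable selection, not through Dedekind completeness. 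To salvage your route you would need either to restrict to approximants for which the relevant positive parts demonstrably exist in $L^\infty(V)$ (finite-rank operators, or kernels), or to carry out the whole argument in the Kakutani representation and then check that the resulting dominating operators still make sense on measurable functions; as written, the proof has a genuine gap.
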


\begin{proof}[Proof of Corollary~\ref{cor:BbE}]
	The proof relies on Theorem~\ref{thm:QCbyDOM} and on the following equivalent definition of the positive part of a kernel: given a kernel $R$ from $E$ to $L^\infty(V)$, we have
	\begin{align}
		R_+f(x):=(\delta_xR)_+f,
	\end{align}
	where $(\delta_x R)_+$ is the positive part of the measure $\delta_x R$. The fact that  $R_+$ is itself a kernel is a consequence of Section~2 in~\cite{DubinsFreedman1964}. In addition, we have $\VERT R_+\VERT\leq \VERT R\VERT$, since, for all non-negative $f\in L^\infty(V)$ and all $x\in E$, 
	\begin{align*}
		|R_+ f(x)|\leq R_+ |f|(x) = R|f\mathbf{1}_{A_x}|(x)\leq \VERT R\VERT \,\|f\mathbf{1}_{A_x}\|_V\leq  \VERT R\VERT \,\|f\|_V,
	\end{align*}
	where $A_{x}$ is the positive set in the Hahn decomposition of the measure $\delta_x R$. Similarly, $r(R_+)\leq r(R)$.

	By definition of $r_{ess}(Q)$, for any $\varepsilon$, we can find a finite rank operator $K$ and a (possibly signed) bounded operator $S$ such that $Q=K+S$ and $\VERT S^n\VERT^{1/n} \leq r_{ess}(Q)+\varepsilon$ for some $n\geq 1$. Since we also have $0\leq P^n\leq Q^n =K'+S^n$, with $K'$ a finite rank operator, we can assume without loss of generality that $n=1$ (replacing $Q$ by $Q^n$).
		
	 Since $K$ is a finite rank operator, its image is a finite subspace of $L^\infty(V)$ and hence there exist $h_1,\ldots,h_d$ bounded continuous functionals from $L^\infty(V)$ to $\mathbb R$, and $a_1,\ldots,a_d\in L^\infty(V)$ such that, for all $x\in E$ and all non-negative $f\in L^\infty(V)$, 
	\[
	Kf(x)=\sum_{i=1}^d h_i(f) a_i(x)\leq \bar Kf(x):=\sum_{i=1}^d h_{i,+}(f) |a_i|(x)
	\]
	where $h_{i,+}(f)=\sup\{h_i(g),\,0\leq g\leq f\}$. We deduce that $0\leq P\leq \bar K+S_+$, where $\bar K$ is a non-negative finite rank operator and $\VERT S_+\VERT\leq \VERT S\VERT \leq r_{ess}(Q)+\varepsilon$.
	
	Using Theorem~\ref{thm:QCbyDOM}, we deduce that $r_{ess}(P)\leq r_{ess}(Q)+\varepsilon$, and hence that $r_{ess}(P)\leq r_{ess}(Q)$.
\end{proof}

\begin{remark}
	In $L^p$ spaces, Corollary~\ref{cor:BbE} does not hold (see the counterexample in~\cite{PagterShep1988}), precisely because $\VERT S_+\VERT \leq\VERT S\VERT$ does not hold  in general. We refer the reader to~\cite{AbramovichWickstead1995} for several examples of disaccording compactness properties of $S$ and $S_+$ on $L^p$ spaces.
\end{remark}

\section{Lyapunov type criterion for quasi-compactness in weighted spaces}

\label{sec:Lyap}

In this section, we translate the abstract domination principle into concrete quasi‑compactness criteria for kernels on weighted supremum spaces. By combining a global Lyapunov drift outside a region with local domination by compact kernels inside the region, we obtain simple conditions ensuring quasi‑com\-pactness and explicit bounds on the essential spectral radius. These criteria both generalize strong Feller‑based results and are particularly well suited to non‑Feller kernels and non‑topological state spaces.

Let $E$ be a measurable space endowed with a countably generated $\sigma$-field $\mathcal E$ and $V:E\rightarrow (0,+\infty)$ be a measurable function. Let $P$ be a kernel on $E$ defining a bounded operator on $L^\infty(V)$.

We consider the following hypothesis.

\medskip\noindent\textbf{Assumption (H).} There exist   a 
measurable $E_K\subset E$ such that:
\begin{itemize}
	\item[(H1)] \textit{(Global Lyapunov criterion).} There exists $\theta_1\in(0,r(P))$ such that
	\begin{align*}
		PV(x)\leq \theta_1 V(x),\ \forall x\in E\setminus E_K.
	\end{align*}
	\item[(H2)] \textit{(Local domination).} There exists $\theta_2\in(0,r(P))$ and a positive compact operator $K$ on $L^\infty(V)$ such that
	\begin{align*}
		0\leq Pf(x)\leq Kf(x)+\theta_2 V(x)\|f\|_V,\ \forall x\in E_K,\  f\in L^\infty(V),\ f\geq 0. 
	\end{align*}
\end{itemize}

Under the above assumptions, we have the following result. 

\begin{proposition}
	\label{prop:H1H3}
	If Assumption~(H) holds, then the operator $P$ is quasi-compact in $L^\infty(V)$, with $r_{ess}(P)\leq \theta_1\vee \theta_2$.
\end{proposition}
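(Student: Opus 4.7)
The plan is to apply Theorem~\ref{thm:QCbyDOM} with the compact positive operator $K$ from~(H2) and with $S:=(P-K)_+$, the positive part of $P-K$ in the Banach lattice $L^\infty(V)$. Since $L^\infty(V)$ is Dedekind complete, $S$ is well defined as a positive operator on $L^\infty(V)$. By construction of the positive part one has $P-K\leq S$, so $0\leq P\leq K+S$, and $(P-S)_+$ is also well defined. Theorem~\ref{thm:QCbyDOM} then reduces the task to proving $r(S)\leq \theta:=\theta_1\vee\theta_2$, from which $r_{ess}(P)\leq \theta<r(P)$ will follow, giving quasi-compactness.

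To bound $r(S)$, I would control $SV$ pointwise and invoke the elementary fact that a positive operator $R$ on $L^\infty(V)$ with $RV\leq cV$ has operator norm at most $c$ (since $|Rf|\leq R|f|\leq \|f\|_V\, RV\leq c\|f\|_V\, V$), hence $r(R)\leq c$. Using the definition $SV=(P-K)_+V=\sup\{(P-K)g:0\leq g\leq V\}$, it suffices to show $(P-K)g\leq \theta V$ for every admissible $g$. On $E\setminus E_K$, positivity of $K$ and (H1) give
\[
(P-K)g(x)\leq Pg(x)\leq PV(x)\leq \theta_1\, V(x).
\]
On $E_K$, since $\|g\|_V\leq 1$, (H2) gives
\[
(P-K)g(x)=Pg(x)-Kg(x)\leq \theta_2\, V(x)\,\|g\|_V\leq \theta_2\, V(x).
\]
Thus every element of the supremum family is bounded above by $\theta V$, so the lattice supremum $SV$ is as well, yielding $r(S)\leq \theta$.

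The only delicate point is the passage from the uniform pointwise bound on each $(P-K)g$ to a bound on the lattice supremum $(P-K)_+V$; this is automatic because any common upper bound of a family in a Banach lattice bounds its supremum, so no essential-sup measurability issue enters. Everything else is assembly: Theorem~\ref{thm:QCbyDOM} applies thanks to Dedekind completeness of $L^\infty(V)$, and the hypothesis $\theta_1,\theta_2<r(P)$ ensures that the bound $\theta$ just obtained for $r_{ess}(P)$ is strictly below $r(P)$, giving the claimed quasi-compactness.
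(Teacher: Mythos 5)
Your proof is correct and takes essentially the same route as the paper: the paper applies Theorem~\ref{thm:QCbyDOM} to the hybrid operator $Sf=\mathbf 1_{E\setminus E_K}\,Pf+\mathbf 1_{E_K}\,(P-K)_+f$, whereas you take $S=(P-K)_+$ globally, but the verification that $\vertiii{S}\leq\theta_1\vee\theta_2$ uses (H1) off $E_K$ and (H2) on $E_K$ in exactly the same way, and both choices of $S$ satisfy $0\leq P\leq K+S$ with $(P-S)_+=P-S\geq 0$. The one point to fix is your justification for the existence of $(P-K)_+$: the space $L^\infty(V)$ here consists of genuine measurable functions (not classes modulo a null ideal), is order-isomorphic to $B_b(E)$, and is only Dedekind $\sigma$-complete, not Dedekind complete; the positive part should instead be obtained pointwise from the Hahn decomposition of the signed measures $\delta_x(P-K)$, as in the proof of Corollary~\ref{cor:BbE} (this is also what the paper implicitly relies on), after which your pointwise bounds on $(P-K)_+V$ go through verbatim.
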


\begin{remark}
	If the above Assumptions (H1) and (H2) hold at greater times, that is if there exists $n_0\geq 1$ and $\theta_1,\theta_2\in(0,r(P^{n_0}))$  such that $	P^{n_0}V(z)\leq \theta_1 V(z),\ \forall z\in E\setminus E_K$ and $0\leq P^{n_0}f(x)\leq Kf(x)+\theta_2 V(x)\|f\|_V,\ \forall x\in E_K,\  f\in L^\infty(V),\ f\geq 0$, then $P^{n_0}$ is quasi-compact according to Proposition~\ref{prop:H1H3} and hence $P$ is quasi-compact.
\end{remark}

\begin{remark}
	\label{rem:phi}
	For practical use, we note that $r(P)$ can be bounded from below using Lyapunov type criterion. Following the idea of~\cite{ChampagnatVillemonais2017a}, let us assume that there exists a non-zero non-negative function $\varphi\in L^\infty(V)$ such that $P\varphi\geq \theta \varphi$ for some $\theta>0$. Then $r(P)\geq \theta$. Indeed, this implies that $P^n\varphi\geq \theta^n \varphi$, hence 
	\begin{align*}
		r(P)&=\lim_{n\to+\infty} \VERT P^n\VERT^{1/n}
		\geq \lim_{n\to+\infty} \frac{\|P^n \varphi\|_V^{1/n}}{\|\varphi\|_V^{1/n}}
		\geq  \theta.
	\end{align*}
\end{remark}

\begin{proof}[Proof of Proposition~\ref{prop:H1H3}]
	Let $S$ be the positive kernel defined by
	\[
	Sf(x)=\mathbf 1_{x\notin E_K} Pf(x)+\mathbf 1_{x\in E_K} (P-K)_+f(x),\ \forall x\in E,\  f:L^\infty(V).
	\]
	According to  (H1) and (H2), we have $\VERT S\VERT\leq \theta_1\vee\theta_2<r(P)$.
	
	Then we have $0\leq P\leq Q:=K+S$ with $K$ compact and
	\[
	r_{ess}(Q)\leq r(S)\leq \VERT S\VERT\ < r(P).
	\]
	This and Theorem~\ref{thm:QCbyDOM} conclude the proof of Proposition~\ref{prop:H1H3}.
\end{proof}

In practice, there are many ways to prove that an operator $K$ on $L^\infty(V)$ is compact. However, it is often simpler when the base set on which the functions are defined is localized. To deal with this situation, we propose two corollaries of Proposition~\ref{prop:H1H3} which make use of localized versions of (H2). The first one is particularly useful when (H1) holds for arbitrarily small values of $\theta_1$.

\begin{corollary}
	\label{cor:loc1}
	Assume that (H1) holds and that, for some real $\theta_3>0$, integer $k\geq 1$, and  some positive operator $G$ on $L^\infty(V)$ with $G^k$ compact,
	\begin{align}
		\label{eq:cor:loc1}
		0\leq P(\mathbf 1_{E_K}f)(x)\leq G f(x)+\theta_3V(x)\|f\|_V,\quad\forall x\in E_K\text{ and }f\in L^\infty(V).
	\end{align}
	If 
	\[
	\theta:=(\theta_3+\theta_1)\sum_{i=0}^{k-1} \VERT G^{i}\VERT \, \VERT P^{k-i}\VERT <r(P)^{k+1},
	\]
	then  Proposition~9 applies to $P^{k+1}$. In particular, $P$ is quasi-compact and we have $r_{ess}(P)\leq \theta_1\vee \theta^{1/(k+1)}$.
\end{corollary}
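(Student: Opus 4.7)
The strategy is to verify that Proposition~\ref{prop:H1H3} applies to $P^{k+1}$ (with the same $E_K$), from which the quasi-compactness of $P$ and the bound $r_{ess}(P)\le\theta_1\vee\theta^{1/(k+1)}$ follow via the identity $r_{ess}(P^{k+1})=r_{ess}(P)^{k+1}$ together with the assumption $\theta<r(P)^{k+1}$. For (H1) applied to $P^{k+1}$, a single use of (H1) for $P$ gives $P^{k+1}V(x)\le\VERT P^k\VERT\,PV(x)\le\theta_1\VERT P^k\VERT V(x)$ for $x\in E\setminus E_K$, and $\theta_1\VERT P^k\VERT$ is dominated by $\theta<r(P)^{k+1}$.

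The heart of the proof is verifying (H2) for $P^{k+1}$. Setting $\alpha:=\theta_1+\theta_3$, I would establish by induction on $k\ge 0$ the pointwise inequality
\[
0\le P^{k+1}f(x)\le G^k Pf(x)+\alpha\,\|f\|_V\sum_{i=0}^{k-1}G^iV(x)\,\VERT P^{k-i}\VERT
\]
for all $x\in E$ and $f\ge 0$ in $L^\infty(V)$. The base case $k=0$ is tautological. For the step from $k$ to $k+1$, write $P^{k+2}f(x)=P(P^{k+1}f)(x)$ and, for $x\in E_K$, decompose the inner argument as $P^{k+1}f=\mathbf{1}_{E_K}P^{k+1}f+\mathbf{1}_{E\setminus E_K}P^{k+1}f$. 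The localization~(\ref{eq:cor:loc1}) applied to $P^{k+1}f$ controls the first piece, yielding $P(\mathbf{1}_{E_K}P^{k+1}f)(x)\le G(P^{k+1}f)(x)+\theta_3V(x)\|P^{k+1}f\|_V$. Injecting the induction hypothesis inside $G$ (using that $G$ is positive and that $G\cdot G^iV=G^{i+1}V$) produces $G^{k+1}Pf(x)$ together with the shifted sum $\alpha\,\|f\|_V\sum_{j=1}^{k}G^jV(x)\VERT P^{k+1-j}\VERT$. The second piece $P(\mathbf{1}_{E\setminus E_K}P^{k+1}f)(x)$ is handled by applying (H1) at the intermediate variable $y\in E\setminus E_K$: $P^{k+1}f(y)=P(P^kf)(y)\le\theta_1V(y)\VERT P^k\VERT\|f\|_V$; integrating against $P(x,dy)$ and using $PV\le\VERT P\VERT V$ produces a $V(x)\|f\|_V$-type contribution that, together with $\theta_3V(x)\|P^{k+1}f\|_V$, supplies the missing $j=0$ summand of the new sum. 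On $E\setminus E_K$, (H1) applied directly to $P^{k+1}f$ gives $P^{k+2}f(x)\le\theta_1\VERT P^{k+1}\VERT V(x)\|f\|_V$, which is absorbed in the same $j=0$ summand.

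Bounding $G^iV(x)\le\VERT G^i\VERT V(x)$ pointwise reduces the induction to the operator inequality $0\le P^{k+1}f(x)\le G^kPf(x)+\theta V(x)\|f\|_V$ valid for all $x\in E$. Since $G^k$ is compact and $P$ is bounded, $K':=G^kP$ is compact, which is precisely (H2) for $P^{k+1}$ with compact operator $K'$ and constant $\theta<r(P^{k+1})$. Combined with the Lyapunov bound of the first paragraph, Proposition~\ref{prop:H1H3} applies to $P^{k+1}$, giving $r_{ess}(P^{k+1})\le\theta$, and hence $r_{ess}(P)\le\theta^{1/(k+1)}$, together with the quasi-compactness of $P$.

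The main obstacle is the inductive step on $E_K$: the summand $P(\mathbf{1}_{E\setminus E_K}P^{k+1}f)(x)$ cannot be controlled by~(H1) at $x$, since $x\in E_K$. The trick is to apply (H1) at the inner variable $y\in E\setminus E_K$ in the one-step decomposition $P^{k+1}f(y)=P(P^kf)(y)$, so that the $\theta_1$ factor is extracted before the final integration against $P(x,dy)$.
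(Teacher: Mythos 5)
Correct, and essentially the paper's own argument: you iterate the decomposition $P^{k+1}f=P(\mathbf 1_{E_K}P^kf)+P(\mathbf 1_{E\setminus E_K}P^kf)$, applying \eqref{eq:cor:loc1} to the first piece and (H1) at the \emph{inner} variable to the second, arrive at the domination of $P^{k+1}$ on $E_K$ by the compact operator $G^kP$ plus $\theta V\|\cdot\|_V$, check (H1) for $P^{k+1}$ via $\theta_1\vertiii{P^k}\leq\theta$, and invoke Proposition~\ref{prop:H1H3}; your explicit induction is just a more careful rendering of the paper's ``immediate iteration''. The only blemish is a shared one: the $E\setminus E_K$ piece really yields a factor $\theta_1\vertiii{P^{k-i-1}}\,\vertiii{P}$ rather than $\theta_1\vertiii{P^{k-i}}$ (submultiplicativity goes the wrong way), a slack already present in the paper's bookkeeping and harmless up to a trivial adjustment of the definition of $\theta$.
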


\begin{remark}
	\label{rem:strongFeller}
	Assume that $E$ is a separable topological space, $E_K$ is a relatively compact set with $\sup_{E_K} V<\infty$, and that 
	\[
	P(\mathbf 1_{E_K}f)(x)\leq G(\mathbf 1_{E_K}f)(x)\text{ for all $x\in E_K$ and all $f\in L^\infty(V)$},
	\]
	for some strong Feller kernel $G$  on $E_K$. Since $G$ is strong Feller, then $G^2$ is ultra-Feller and hence compact on $L^\infty(V_{\vert E_K})$. In particular, its extension to  $L^\infty(V)$, defined by $f\in L^\infty(V)\to \mathbf 1_{E_K}G(\mathbf 1_{E_K}f)$, is compact and dominates $P$. This implies that~\eqref{eq:cor:loc1} holds with $\theta_3=0$ and $k=2$. If in addition (H1) holds  with $\theta_1(\VERT P^{2}\VERT + \VERT G\VERT \, \VERT P\VERT )<r(P)^{3}$, then the assumption of Corollary~\ref{cor:loc1} hold.
	
	This shows that Corollary~\ref{cor:loc1} generalizes the main result of~\cite{FerreRoussetEtAl2018} to non-regular kernels. More precisely, the assumptions therein entail that, for all $n>0$, there exists a compact subset $E_n$ of $E$ such that (H1) holds with $\theta_1=1/n$ and that $P$ restricted to $E_n$ is strong Feller. Our result entails that the conclusion of~\cite{FerreRoussetEtAl2018} still holds under a weaker assumption, namely that $P$ is dominated by (not necessarily equal to) an operator which is strong Feller on each~$E_n$.
\end{remark}

\begin{proof}[Proof of Corollary~\ref{cor:loc1}]
	We have, for all $x\in E_K$ and $f\in L^\infty(V)$,  using the domination assumption~\eqref{eq:cor:loc1} and Assumption~(H1),
	\begin{align*}
		P^{k+1}f(x)&=P(\mathbf 1_{E_K}P^{k}f)(x)+P(\mathbf 1_{E_K^c}PP^{k-1}f)(x)\\
		&\leq GP^{k}f(x)+\theta_3\|P^{k}f\|_V V(x)+\theta_1\|P^{k}f\|_V V(x)\\
		&\leq G^kPf(x)+(\theta_3+\theta_1)\sum_{i=0}^{k-1} \VERT G^{i}\VERT \, \|P^{k-i}f\|_V  V(x),
	\end{align*}
	by immediate iteration. In particular, we deduce that Assumption~(H2) with $\theta_2=\theta$ holds for $P^{k+1}$.
	
	In addition, for all $x\notin E_K$,
	\begin{align*}
		P^{k+1}V(x)\leq \|P^k\|_V PV(x)\leq \VERT P\VERT^k\,\theta_1 V(x),
	\end{align*}
	where $\VERT P\VERT^k\,\theta_1 \leq \theta <r(P)^{k+1}=r(P^{k+1})$ under our assumptions, so that Assumption~(H1) holds  for $P^{k+1}$. This and Proposition~\ref{prop:H1H3} conclude the proof of Corollary~\ref{cor:loc1}.
\end{proof}

Our second localized set of assumption makes use of moments properties and, compared to the previous corollary,  is particularly useful when assumption (H1) is not known for arbitrarily small values of $\theta_1$.

\begin{corollary}
	\label{cor:localdomqsdbis}
Assume that there exists $\theta_4\in[0,r(P))$ and $A>0$ such that
	\[
\theta_2:=\sup_{x\in E_K} \frac{1}{V(x)}P(V\mathbf 1_{V>A})(x)+\theta_4<r(P)
	\]
	and such that there exists a compact operator $K_A$ from the function space $L^\infty(V_{\vert \{V\leq A\}})$ to $L^\infty(V_{\vert E_K})$ such that 
	\begin{align}
		\label{eq:cor:local2}
		0\leq P(f\mathbf 1_{V\leq A})(x)\leq K_A f(x)+\theta_4V(x)\|f\|_V,\quad\forall x\in E_K\text{ and }f\in L^\infty_+(V_{\vert \{V\leq A\}}).
	\end{align}
	Then (H2) holds. In particular,  if in addition (H1) holds, then $P$ is quasi-compact on $L^\infty(V)$ with $r_{ess}(P)\leq \theta_1\vee \theta_2$.
\end{corollary}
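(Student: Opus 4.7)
The plan is to verify assumption (H2) from the localized hypothesis, and then invoke Proposition~\ref{prop:H1H3} directly. The guiding idea is to split any non-negative $f \in L^\infty(V)$ as $f = f\mathbf{1}_{V\leq A} + f\mathbf{1}_{V>A}$, control the "tail" piece using the uniform bound $\sup_{x\in E_K} V(x)^{-1}P(V\mathbf{1}_{V>A})(x) \leq \theta_2 - \theta_4$, and control the "bounded" piece using the compact operator $K_A$. The tail contribution is absorbed into the $\theta_2 V(x)\|f\|_V$ term of (H2), while $K_A$ provides the compact operator required by (H2), once it has been properly extended from $L^\infty(V_{|\{V\leq A\}})$ to $L^\infty(V)$.

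More concretely, I would define $\tilde K: L^\infty(V) \to L^\infty(V)$ by
\[
\tilde K f(x) := \mathbf{1}_{E_K}(x)\, K_A(f\mathbf{1}_{V\leq A})(x).
\]
The idea is that $\tilde K$ factors as a composition
\[
L^\infty(V) \xrightarrow{R_1} L^\infty(V_{|\{V\leq A\}}) \xrightarrow{K_A} L^\infty(V_{|E_K}) \xrightarrow{R_2} L^\infty(V),
\]
where $R_1 f = f\mathbf{1}_{V\leq A}$ is the restriction map (bounded of norm at most $1$) and $R_2$ is the extension-by-zero map (also bounded of norm at most $1$, since $V>0$). Since the middle operator $K_A$ is compact by assumption, $\tilde K = R_2 \circ K_A \circ R_1$ is compact on $L^\infty(V)$ as the composition of a compact operator with bounded ones.

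Next, for $x \in E_K$ and non-negative $f \in L^\infty(V)$, I would write
\[
Pf(x) \;=\; P(f\mathbf{1}_{V\leq A})(x) + P(f\mathbf{1}_{V>A})(x).
\]
The local domination assumption~\eqref{eq:cor:local2} bounds the first term by $K_A(f\mathbf{1}_{V\leq A})(x) + \theta_4 V(x)\|f\mathbf{1}_{V\leq A}\|_V \leq \tilde K f(x) + \theta_4 V(x)\|f\|_V$. The second term is handled by the pointwise bound $f\mathbf{1}_{V>A} \leq \|f\|_V\, V\mathbf{1}_{V>A}$, which gives $P(f\mathbf{1}_{V>A})(x) \leq \|f\|_V\, P(V\mathbf{1}_{V>A})(x) \leq (\theta_2-\theta_4) V(x) \|f\|_V$ uniformly for $x \in E_K$. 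Adding the two estimates yields
\[
0 \leq Pf(x) \leq \tilde K f(x) + \theta_2 V(x)\|f\|_V, \qquad \forall x \in E_K,\ f \in L^\infty(V),\ f\geq 0,
\]
which is exactly (H2) with the compact operator $\tilde K$ and constant $\theta_2$. Combined with (H1), Proposition~\ref{prop:H1H3} then yields quasi-compactness of $P$ with $r_{ess}(P) \leq \theta_1 \vee \theta_2$.

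I do not expect a serious obstacle here: the proof is essentially a splitting argument once one thinks of treating $\{V \leq A\}$ and $\{V > A\}$ separately. The only point requiring care is the compactness of the extended operator $\tilde K$, which amounts to observing that restriction and extension maps between the weighted supremum spaces are bounded and hence do not destroy compactness; this is why the statement specifies $K_A$ as an operator between the precise pair of restricted spaces $L^\infty(V_{|\{V\leq A\}})$ and $L^\infty(V_{|E_K})$.
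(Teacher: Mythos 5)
Your proof is correct and follows essentially the same route as the paper: the same splitting $f=f\mathbf 1_{V\le A}+f\mathbf 1_{V>A}$, the same tail estimate $P(f\mathbf 1_{V>A})\le(\theta_2-\theta_4)V\|f\|_V$ on $E_K$, and the same compact operator $\mathbf 1_{E_K}K_A(f_{\vert\{V\le A\}})$ obtained by composing $K_A$ with the (bounded) restriction and extension-by-zero maps. Your explicit factorization $R_2\circ K_A\circ R_1$ is just a slightly more detailed phrasing of the paper's observation that $L^\infty(V_{\vert E_K})$ embeds continuously into $L^\infty(V)$.
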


\begin{remark}
	In~\cite{BenaimEtAl2022}, the authors assume, instead of~\eqref{eq:cor:local2}, that $E_K$ is relatively compact and $P$ strongly Feller, and deduce that $P$ is quasi-compact. Similarly as in Remark~\ref{rem:strongFeller}, our result shows that the conclusion of~\cite{BenaimEtAl2022} holds under the much weaker constraint that $P$ is dominated by (not necessarily equal to) a strong Feller semigroup on a sub-level set of $V$. See Proposition~\ref{prop:strongFeller} for details in the context of sub-Markov continuous time processes.
\end{remark}

\begin{proof}[Proof of Corollary~\ref{cor:localdomqsdbis}]
	We have, for all $x\in E_K$ and all $f\in L^\infty(V)$, $f\geq 0$,
	\begin{align*}
		0\leq Pf(x)&\leq K_A(f\mathbf 1_{V\leq A})(x)+ P(f\mathbf 1_{\{V>A\}})(x)+\theta_4 V(x)\|f\|_V\\
		&=Kf(x)+ P(f\mathbf 1_{\{V>A\}})(x)+\theta_4 V(x)\|f\|_V\\
		&\leq Kf(x)+\theta_2V(x)\|f\|_V,
	\end{align*}
	where
	$Kf:=\mathbf 1_{V\leq E_K}K_A(f_{\vert\{V\leq A\}})$ defines a compact operator on $L^\infty(V)$, indeed: by assumption, $f\mapsto K_A(f_{\vert\{V\leq A\}})$ maps the unit ball of $L^\infty(V)$ to a compact subset of $L^\infty(V_{\vert E_K})$, which is continuously embedded in $L^\infty(V)$ through $f\mapsto \mathbf 1_{E_K}f$.
	This shows that Assumption~(H2) holds.
\end{proof}

To conclude this  section, we provide a simple criterion for quasi-compactness, which leverages on the two  localized criteria stated in Corollaries~\ref{cor:loc1} and~\ref{cor:localdomqsdbis}: we show that quasi-compactness can be deduced from the existence of locally uniformly integrable densities, without regularity assumption nor assuming that $E$ is a topological space.

\begin{proposition}
	\label{prop:density}
	Assume that Assumption~(H1) holds and that there exists a measure $\nu$ on $E$ and a measurable $p:E\times E\to [0,+\infty)$ such that, for all $x\in E_K$,
	\[
	Pf(x)=\int_E f(y) p(x,y)\,\nu(\mathrm dy).
	\]
	Assume in addition that one of the two following conditions holds:
	\begin{enumerate}[i)]
		\item we have $\theta_1 \VERT P\VERT <r(P)$,  $\nu(V\mathbf 1_{E_K})<\infty$ and 
		\[
		\sup_{x\in E_K} \frac1{V(x)}\int_{E_K} p(x,y)\mathbf 1_{p(x,y)>B}V(y)\,\nu(\mathrm dy)\xrightarrow[B\to+\infty]{} 0;
		\]
		\item for all $A>0$, $\nu(V\mathbf 1_{V\leq A})<\infty$ and
		\[
		\sup_{x\in E_K}  \frac1{V(x)}\int_{V\leq A} p(x,y)\mathbf 1_{p(x,y)>B}V(y)\,\nu(\mathrm dy)\xrightarrow[B\to+\infty]{} 0;
		\]
	\end{enumerate}
	Then $P$ is quasi-compact in $L^\infty(V)$.
\end{proposition}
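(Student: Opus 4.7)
The plan is to verify the hypotheses of Corollary~\ref{cor:loc1} in case~(i) and of Corollary~\ref{cor:localdomqsdbis} in case~(ii). The common device is to split the density at a threshold $B>0$ as $p = p_B + (p-p_B)$, with $p_B := p\,\mathbf{1}_{\{p\leq B\}}$, so that the truncated part yields a compact operator and the uniform integrability hypothesis makes $p - p_B$ contribute an arbitrarily small multiple of $V$. Concretely, for a measurable $S \subseteq E$ with $\nu(V\mathbf{1}_S) < \infty$, I would introduce the positive truncated operator
\[
K_{S,B}f(x) := \mathbf{1}_{E_K}(x)\int_S p_B(x,y)\,f(y)\,\nu(dy),
\]
and observe that, for every $x\in E_K$ and $f\geq 0$ in $L^\infty(V)$,
\[
P(f\mathbf{1}_S)(x) \;\leq\; K_{S,B}f(x) + \varepsilon_{S,B}\,V(x)\,\|f\|_V,
\]
where $\varepsilon_{S,B} := \sup_{x\in E_K} V(x)^{-1}\int_S p\,\mathbf{1}_{\{p>B\}}\,V\,d\nu$ tends to $0$ as $B\to\infty$ by the integrability assumption, with $S=E_K$ in case~(i) and $S=\{V\leq A\}$ in case~(ii).

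The key technical step is to prove that $K_{S,B}$ defines a compact operator between the relevant weighted supremum spaces. The countable generation of $\mathcal{E}$ together with $\nu(V\mathbf{1}_S) < \infty$ makes $L^1(V\nu_{|S})$ separable; approximating the bounded kernel $(x,y)\mapsto p_B(x,y)V(y)/V(x)$ by step functions then yields finite-rank approximations of $K_{S,B}$ that converge in operator norm, whence compactness.

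Case~(i) then follows by applying Corollary~\ref{cor:loc1} with $S=E_K$, $G=K_{E_K,B}$ and $k=1$: the assumption $\theta_1\VERT P\VERT<r(P)$, combined with $\varepsilon_{E_K,B}\to 0$ as $B\to\infty$, makes the quantitative condition of the corollary hold once $B$ is large enough. For case~(ii), I would apply Corollary~\ref{cor:localdomqsdbis} with $S=\{V\leq A\}$ and $K_A=K_{S,B}$; here $A$ is first chosen large enough so that $\sup_{x\in E_K}V(x)^{-1}P(V\mathbf{1}_{V>A})(x)$ is strictly below $r(P)$, using that $P(V\mathbf{1}_{V\leq A})(x)\uparrow PV(x)$ by monotone convergence together with the boundedness of $P$ on $L^\infty(V)$, and then $B$ is chosen large enough so that the remaining $\theta_4=\varepsilon_{S,B}$ absorbs the slack. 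The delicate point throughout is the compactness of $K_{S,B}$ on weighted $L^\infty$: since mere boundedness of the kernel does not imply compactness as it would in $L^p$ for $p<\infty$, the countably generated structure of $\mathcal{E}$ plays an essential role in producing the norm-convergent finite-rank approximations.
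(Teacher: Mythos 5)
Your overall strategy---truncating the density at level $B$ and feeding the resulting decomposition into Corollary~\ref{cor:loc1} in case~(i) and Corollary~\ref{cor:localdomqsdbis} in case~(ii)---is the paper's. The genuine gap is your ``key technical step'': the truncated operator $K_{S,B}f(x)=\mathbf 1_{E_K}(x)\int_S p(x,y)\mathbf 1_{\{p(x,y)\le B\}}f(y)\,\nu(\mathrm dy)$ need \emph{not} be compact on a weighted supremum space, and the proposed proof cannot establish it. Compactness of such an operator on $L^\infty(V)$ requires the family $\{y\mapsto p_B(x,y)/V(x):x\in E_K\}$ to be totally bounded in $L^1(V\nu_{\vert S})$; separability of $L^1$ only gives strong measurability of $x\mapsto p_B(x,\cdot)$, hence pointwise-in-$x$ (not uniform-in-$x$) approximation by step functions, so your finite-rank approximants do not converge in operator norm. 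Worse, the compactness claim is simply false in general: take $E=E_K=[0,1]$, $\nu$ Lebesgue, $V\equiv 1$, and $p(x,y)=\mathbf 1_{\{r_n(y)=1\}}$ for $x\in(2^{-n-1},2^{-n}]$, where $(r_n)$ are the Rademacher functions. This kernel is jointly measurable and bounded by $1$ (so it satisfies all the hypotheses of case~(i) and equals its own truncation for $B\ge 1$), yet applying the operator to $f_m=r_m$ gives $Kf_m=\tfrac12\mathbf 1_{(2^{-m-1},2^{-m}]}$, a sequence with mutual uniform distance $\tfrac12$; the operator is bounded but not compact.

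The repair is exactly the paper's one-line observation, which you should substitute for your compactness argument: since $p\,\mathbf 1_{\{p\le B\}}\le B$ and all objects are positive, for $f\ge 0$ one has $K_{S,B}f(x)\le B\,\nu(f\mathbf 1_S)\,\mathbf 1_{E_K}(x)$, and Corollaries~\ref{cor:loc1} and~\ref{cor:localdomqsdbis} only require the \emph{dominating} operator to be compact. So take $G$ (resp.\ $K_A$) to be the rank-one operator $f\mapsto B\,\nu(f\mathbf 1_S)\,\mathbf 1_{E_K}$, which is compact precisely because $\nu(V\mathbf 1_S)<\infty$, and keep your error term $\varepsilon_{S,B}\to 0$ unchanged; the rest of your argument then goes through. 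A secondary caveat on your case~(ii): monotone convergence gives $P(V\mathbf 1_{V>A})(x)\to 0$ only pointwise in $x$, so it does not by itself make $\sup_{x\in E_K}V(x)^{-1}P(V\mathbf 1_{V>A})(x)$ small uniformly, as Corollary~\ref{cor:localdomqsdbis} requires; the paper is equally terse on this point, but your appeal to monotone convergence plus boundedness of $P$ is not a proof of the needed uniformity.
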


\begin{proof}[Proof of Proposition~\ref{prop:density}]
	Let us  consider case {\em i)}. We check that Corollary~\ref{cor:loc1} applies with $k=1$ Let $B>0$ be large enough so that
	\[
	\theta_3:=\sup_{x\in E_K}  \frac1{V(x)} \int_{E_K} p(x,y)\mathbf 1_{p(x,y)>B}V(y)\,\nu(\mathrm dy)< \frac{r(P)-\theta_1 \VERT P\VERT }{\VERT P\VERT }.
	\]
	Then we have, for all $x\in E_K$ and all $f\in L^\infty_+(V)$,
	\begin{align*}
		0 \leq P(\mathbf 1_{E_K}f)(x)&= \int_{E_K} p(x,y)\mathbf 1_{p(x,y)\leq B}\,f(y)\,\nu(\mathrm dy)+\int_{E_K} p(x,y)\mathbf 1_{p(x,y)> B}\,f(y)\,\nu(\mathrm dy)\\
		&\leq B\, \int_{E_K} f(y)\,\nu(\mathrm dy)+\theta_3 \|f\|_V\,V(x).
	\end{align*}
	Setting $G:f\in L^\infty(V)\mapsto B\nu(f\mathbf 1_{E_K})\mathbf 1_{E_K}$ (which defines a compact operator) concludes the proof.
	
	The case {\em ii)} can be treated almost identically, using Corollary~\ref{cor:localdomqsdbis} and is thus not detailed.
\end{proof}

\section{Asymptotics of iterates of quasi-compact kernels and of quasi-compact semigroups}

\label{sec:QSDbyQC}

In this section, quasi‑compactness is used to describe the fine asymptotics of positive kernels and continuous‑time semigroups on weighted 
$L^\infty$-spaces. For discrete‑time kernels, we obtain a decomposition of iterates along finitely many eigenelements, with uniform control in the weighted norm and a precise description of the underlying cyclic structure of the state space. For measurable continuous‑time semigroups, we show that quasi‑compactness at a single time propagates to all times, rules out periodic behaviour, and yields convergence to eigenelements as time goes to infinity.

\subsection{Iterates of quasi-compact kernels}
\label{sec:QSDbyQCdis}

Let $E$ be a measurable space endowed with a countably generated $\sigma$-field $\mathcal E$ and $P$ a kernel on $E$.
We also assume that we are given a function $V:E\to(0,+\infty)$.

\begin{theorem}
	\label{thm:quasi-compact-and-QSD}
	Assume that
$P$ acts as a quasi-compact operator on $L^\infty(V)$.
	Then there exists an integer $d\geq 1$, an integer valued bounded function $j:E\to \mathbb N$, a finite set $I$ and some non-zero measures $\nu_{i}$ on $E$ such that $\nu_i(V)<+\infty$ and non-identically zero non-negative functions $\eta_{i}\in L^\infty(V)$ for each $i\in I$, such that, for all $f\in L^\infty(V)$, all $n\geq 1$, all $k\in\{0,1,\ldots,d-1\}$ and all $x\in
	E$,
\begin{equation}
	\label{eq:etathm}
	\left|\frac{r(P)^{-dn-k}}{(dn+k)^{j(x)}} P^{nd+k}f-\sum_{i\in I} \eta_{i,k}(x)\nu_{i,k}(f)\right|\leq \alpha_{nd+k} V(x)\|f\|_{V},
\end{equation}
	where $\nu_{i,k}=\nu_i P^k$, $\eta_{i,k}=r(P)^{-k}\eta_i$ and $\alpha_{n}$ goes to $0$ when $n\to+\infty$.
	
	In addition, there exist disjoint subsets $E_i\subset E$, $i\in I$, such that
	\begin{enumerate}
		\item 
		$F_{i}:=E_{i}\cup\{x\in E,\ \exists n\geq 0,\ P^{nd}\mathbf 1_{E_{i}}=0\}$ is a closed subset for 
		$P$, meaning that, for all $x\in F_i$, $P^d \mathbf 1_{F_i^c}=0$,
		\item $\nu_{i}$ is supported by $F_{i}$ and 
		$\nu_iP^d=r(P)^d\nu_i$,
		\item on each $E_{i}$, $j=0$, $\eta_i>0$ and $\eta_{i'}=0$ for all $i'\neq i$.
	\end{enumerate} 
\end{theorem}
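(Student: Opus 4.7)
The plan is to leverage the spectral decomposition furnished by quasi-compactness: as recalled in Section~\ref{sec:QCdom}, there is a $P$-invariant splitting $L^\infty(V)=F\oplus H$ with $\dim F<\infty$, every eigenvalue of $P_{\rvert F}$ on the peripheral circle $\{|z|=r(P)\}$, and $r(P_{\rvert H})<r(P)$. Writing $\Pi$ for the associated spectral projection, I would decompose $P^n=P^n\Pi+P^n(\mathrm{Id}-\Pi)$; the second piece is bounded in operator norm by $C\rho^n$ for some $\rho<r(P)$ and is eventually absorbed into the error $\alpha_n V(x)\|f\|_V$. The heart of the proof is therefore to analyse the finite-rank operator $P^n\Pi$ on the finite-dimensional Banach lattice $F$.

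The next step is to extract the period $d$ and the Jordan form of $P_{\rvert F}$. Since $P$ is positive on the Banach lattice $L^\infty(V)$, a classical Perron--Frobenius--Schaefer type argument forces the peripheral spectrum of $P_{\rvert F}$ to lie in $r(P)\cdot U_d$ for some integer $d\geq 1$, where $U_d$ denotes the $d$-th roots of unity. Passing to $P^d$ collapses the peripheral spectrum to $\{r(P)^d\}$, so that $P^d_{\rvert F}=r(P)^d(\mathrm{Id}+N)$ with $N$ nilpotent of some index $m+1$, and
\[
(P^d)^n_{\rvert F}=r(P)^{dn}\sum_{j=0}^{m}\binom{n}{j}N^j.
\]
I would then define $j(x)$ as the largest integer $j$ for which the linear form $f\mapsto (N^j\Pi f)(x)$ is not identically zero on $L^\infty(V)$: this function is integer-valued, bounded by $m$, and precisely accounts for the polynomial factor $(dn+k)^{j(x)}$ in~\eqref{eq:etathm}.

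The decomposition of $E$ is extracted from $\ker N$, the genuine eigenspace of $P^d_{\rvert F}$ for the eigenvalue $r(P)^d$. Using positivity of $P^d$ on $F$ together with standard Perron--Frobenius arguments, $\ker N$ admits a basis of non-negative eigenfunctions $(\eta_i)_{i\in I}$ whose positivity sets $E_i$ can be chosen pairwise disjoint. Biorthogonal non-negative eigenmeasures $\nu_i$ arise by dualising the spectral projection onto $\ker N$; they are genuine $\sigma$-finite measures (rather than merely $L^\infty$-functionals) because they can be read off from the kernel-theoretic structure of $\Pi$, and $\nu_i(V)<\infty$ since $V\in L^\infty(V)$ and each $\nu_i$ is a bounded linear form on $L^\infty(V)$. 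One then verifies that the set $F_i$ of points eventually reaching $E_i$ under $P^d$ is $P^d$-closed, carries $\nu_i$, and satisfies $\nu_i P^d=r(P)^d\nu_i$.

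With this structure in place, the convergence~\eqref{eq:etathm} is assembled as follows: the $H$-remainder is geometric, hence negligible; on $F$, dividing $(P^d)^n_{\rvert F}$ by $r(P)^{dn}(dn)^{j(x)}$ isolates the leading binomial term involving $N^{j(x)}\Pi f(x)$, which lies in $\ker N$ and therefore decomposes along the $\eta_i$ as $\sum_i\eta_i(x)\nu_i(f)$; the $k$-step shift then follows by applying the $k=0$ statement to $P^k f$, which is exactly what the formulas $\eta_{i,k}=r(P)^{-k}\eta_i$ and $\nu_{i,k}=\nu_i P^k$ encode. The main obstacle is the third paragraph: establishing the Perron--Frobenius decomposition of $\ker N$ into pairs $(\eta_i,\nu_i)$ with the prescribed absorbing/positivity structure $(E_i,F_i)$, in the non-conservative, possibly reducible, weighted $L^\infty$ setting, while simultaneously tracking the state-dependent polynomial exponent $j(x)$ generated by the nilpotent part of $P^d_{\rvert F}$.
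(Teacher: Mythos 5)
Your route is genuinely different from the paper's. You work entirely on the finite-dimensional peripheral subspace $F$ of the quasi-compactness decomposition and try to read the period, the exponent $j(x)$ and the eigenpairs $(\eta_i,\nu_i)$ off the Jordan structure of $P_{\rvert F}$. The paper instead argues probabilistically: it normalizes $P$ to a sub-Markov kernel, extracts a positive eigenfunction $\eta_1$ (Sasser), performs the Doob $\eta_1$-transform to reduce to a quasi-compact \emph{Markov} kernel --- where Hennion/Revuz/Yosida--Kakutani supply the cyclic classes, the invariant probability measures and the geometric convergence --- then inductively peels off closed subsets $E_1,E_2,\dots$ (quasi-compactness of each restriction being preserved thanks to the domination theorem), and finally assembles the $j(x)$, $E_i$, $F_i$ structure by invoking the reducible-state-space theorem of Champagnat--Villemonais. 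Your approach would be more self-contained spectrally, but as written it leaves the decisive steps unproved.

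Two concrete gaps. First, $\sigma$-additivity of the $\nu_i$: a dual eigenvector of $P$ acting on $L^\infty(V)$ is a priori only a finitely additive set function, and nothing in your argument upgrades it to a genuine measure; ``read off from the kernel-theoretic structure of $\Pi$'' is not an argument, since $\Pi$ is a Riesz projection, not a kernel. This is exactly why the paper routes through Revuz's Proposition~3.78 and needs $\mathcal E$ countably generated (cf.\ the remark following the theorem). Second, the structure of $\ker N$: the claim that $\ker N$ admits a basis of non-negative eigenfunctions ``whose positivity sets can be chosen pairwise disjoint'' is false in general. Already for a $3\times 3$ reducible non-negative matrix with two absorbing states both accessible from a third, transient, state, every basis of non-negative eigenvectors at the spectral radius consists of vectors that are all positive at the transient state. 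The sets $E_i$ in the theorem are not the positivity sets of the $\eta_i$ but their ``recurrent cores'', and identifying them, proving items (1)--(3), and showing that the leading functional $f\mapsto (N^{j(x)}\Pi f)(x)$ factors as $\sum_i\eta_i(x)\nu_i(f)$ is precisely the content you defer to ``standard Perron--Frobenius arguments''. You correctly flag this as the main obstacle, but it is the theorem itself rather than a technicality, so the proposal remains an outline with its core missing.
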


\begin{remark}
	\label{rem:not-countablygenerated}
	In the proof of Theorem~\ref{thm:quasi-compact-and-QSD}, we make use of~\cite[Proposition~3.78, Chapter~6]{Revuz2005}, which requires $\mathcal E$ to be countably generated. Besides that, the proof requires only minor modifications  to get the same result without the countable generation assumption. 
\end{remark}

\begin{remark}
	We apply this result to sub-Markov kernel (meaning that $P\mathbf 1_E\leq  \mathbf 1_E$) in Section~\ref{sec:applis}. Note that,
	when $P$ is a Markov kernel (meaning that $P\mathbf 1_E= \mathbf 1_E$), 
	 the result is known~\cite{Hennion2007} and we have
	 $j\equiv 0$ (see also the extended preprint version~\cite{Hennion2007arxiv}), see also~\cite{Revuz2005} and~\cite{YosidaKakutani1941} for quasi-compact Markov kernels on $B_b(E)$.
\end{remark}

\begin{proof}[Proof of Theorem~\ref{thm:quasi-compact-and-QSD}]
	Replacing $P$ by the kernel by its $V$-transform
	\[
	f\in L^\infty(E)\to (x\mapsto \frac{1}{V(x)\,\VERT P\VERT }P(V f)(x)),
	\]
	we can assume without loss of generality  that $P$ is a sub-Markov kernel (meaning that $P\mathbf 1_E\leq  \mathbf 1_E$). Let $\partial\notin E$ and let $(X_n)_{n\in\mathbb N}$ be an associated absorbed Markov process on $E\cup\{\partial\}$ with absorption at $\partial$, which means that $X$ has the following probability of transitions $\mathbb P_x(X_1\in A)=P\mathbf 1_A(x)$ for all $A\subset E$ and $\mathbb P_x(X_1=\partial)=1-P\mathbf 1_E(x)$ for all $x\in E$, and $\mathbb P_\partial(X_1=\partial)=1$.

The proof relies on the decomposition of the state space $E$ into subsets $E_1, E'_1, \ldots,E_p,E'_p,E'_{p+1}$ such that the semigroup associated to $X$ is well behaved on each $E_k,E'_k$: each $E_k$ can be decomposed into communication classes where it converges (at times that are multiple of some period) to a one-dimensional operator, while each $E'_k$ is such that the process escapes exponentially fast (compared to $\rho(P)$) from $E'_k$. This will allow us to apply~\cite[Theorem~4.1]{ChampagnatVillemonais2024}. 

The proof  is divided in four steps. In the first step, we build  $E_1$ and $E'_1$. In the second step, we build by induction $E_1,E'_1,\ldots,E_p,E'_p$ and $E_{p+1}$. In the third step, we apply the above cited result to the process at times that are multiples of a given integer $d$. In the four step, we conclude by extending the convergence to any time.

\medskip\noindent\textit{Step 1. First sets in the state space decomposition.}	Since $P$ is a quasi-compact operator on $B=L^\infty(V)$, we deduce from~\cite{Sasser1964} that there exists a non-negative non-zero function $\eta_1\in B$ such that $P\eta_1=r(P)\eta_1$. Let $E_1:=\{x\in E,\ \eta_1(x)>0\}$ and define the Markov kernel $T$ on $E$ by
	\begin{align*}
		Tf(x)=\frac{1}{r(P)\eta_1} P(\eta_1 f)(x).
	\end{align*}
	The Markov kernel $T$ is the so called $\eta_1$ transform of $P$. We emphasize, for later use, that $E\setminus E_1$ is a closed subset for the Markov chain $X$ (meaning that $X$ cannot escape $E\setminus E_1$ before absorption).
	
	We set $V_1=\frac{V}{\eta_1}$ on $E_1$. One easily checks that $T$ is a quasi-compact operator on $L^\infty(V_1)$, with underlying state space $E_1$.  We deduce from~\cite[Corollary IV.3]{Hennion2007arxiv} and the adaptation of~\cite[Proposition~3.8]{Revuz2005} (this adaptation is suggested in~\cite{Hennion2007arxiv}, and one can also adapts the arguments of~\cite{YosidaKakutani1941}, the main addition being the non-trivial fact that the kernel $T$ is power-bounded) that there exist a finite number $r\geq 1$ of integers $d_\rho$, $\rho=1,\ldots,r$, and measurable functions $U_{\rho,\delta}\in L^\infty(V_1) $, $\delta=1,\ldots,d_\rho$ such that $\sum_{\rho}\sum_{\delta}U_{\rho,\delta}=1$ and probability measures $m_{\rho,\delta}$ carried by the pairwise disjoint sets $E_{\rho,\delta}=\{U_{\rho,\delta}=1\}$ such that, if $d$ denotes the least common multiple of the $d_\rho$, then for every $k$, 
	\begin{align}
			\label{eq:decomp}
		\left\VERT (T)^{nd+k}-\sum_{\rho=1}^r\sum_{r=1}^{d_\rho}U_{\rho,\delta-k}\otimes m_{\rho,\delta}\right\VERT\leq C\rho^n,
	\end{align}
	for some $\rho<1$ and $C>0$. In addition, the sets $E_\rho=\cup_\delta E_{\rho,\delta}$ are closed set, and the sets $E_{\rho,\delta}$ and the number $d_\rho$ are the corresponding cyclic classes and period.
	
	In terms of our original sub-Markov process, this entails that  the sets $E_{\rho,\delta}$ are absorbing for the Markov chain $(X_{nd})_{n\geq 0}$ and that, for all $f\in L^\infty(V)$ and all $x\in E_{\rho,\delta}$, we have
	\begin{align}
		\label{eq:criticalsink}
		\left| r(P)^{-nd}\mathbb E_x\left(f(X_{nd})\mathbf 1_{X_n\in E_{1,\rho,\delta}}\right)-\eta_{1,\rho,\delta}(x) \nu_{1,\rho,\delta}(f)\right|\leq C\rho^n V(x)\|f\|_V,
	\end{align}
	where $E_{1,\rho,\delta}:=E_{\rho,\delta}$, $\eta_{1,\rho,\delta}:=\eta_1 U_{\rho,\delta}$ is positive on $E_{1,\rho,\delta}$ and $\nu_{1,\rho,\delta}=m_{\rho,\delta}(./\eta_1)$.  Finally, setting $E'_1=E_1\setminus \cup_\rho E_\rho$, we have  $m_{\rho,\delta}(E'_1)=0$ and hence, for all $f\in L^\infty(V)$  and all $x\in E'_1$,
	\begin{align}
		\label{eq:sink}
		\left| r(P)^{-nd}\mathbb E_x\left(f(X_{nd})\mathbf 1_{X_n\in E'_1}\right)\right|\leq C\rho^n V(x)\|f\|_V.
	\end{align}
	This concludes the first step.
	
	\medskip\noindent \textit{Step~2. Inductive construction of the other subsets.} As mentioned above, the subset $E\setminus E_1$ is closed. We denote by $R=P_{\vert E\setminus E_1}$ the kernel $P$ restricted to $E\setminus E_1$. 
	
	If $r(R)<r(P)$, then we set $p=1$ and $E'_{p+1}=E\setminus E_1$. The definition of the spectral radius entails that~\eqref{eq:sink} holds with $E'_{p+1}$ instead of $E'_1$.
	
	Otherwise, we have $r(R)=r(P)$ and, since it is dominated by $P$, the kernel $R$ is quasi-compact according to Theorem~\ref{thm:QCbyDOM}. 
		As a consequence, Step~1 applies to $R$ and we deduce that there exists $E_2\subset E\setminus E_1$ such that~\eqref{eq:criticalsink} and~\eqref{eq:sink} holds, with the  subsets $E_{1,\rho,\delta}$ replaced by  the closed subsets $E_{2,\rho,\delta}\subset E_2$ (the range of $\rho$ and $\delta$ may of course change, while we can assume without loss of generality that $d$ is the same), the functions $\eta_{1,\rho,\delta}$ replaced by $\eta_{2,\rho,\delta}$ (which are positive on $E_{2,\rho,\delta}$) and the probability measures $\nu_{1,\rho,\delta}$ replaced by the probability measures $\nu_{2,\rho,\delta}$ on $E_{2,\rho,\delta}$, and $E'_1$ replaced by $E'_2:=E_2\setminus \cup_{\rho,\delta} E_{2,\rho,\delta}$.
		
		Repeating this procedure, we build iteratively $E_3,E'_3,\ldots,E_p,E'_p,E'_{p+1}$, for some $p\geq 2$. Note that each $\eta_{i,\rho,\delta}$ is an eigenfunction associated to the eigenvalue $r(P)^d$ for the semigroup associated to $P^d$ restricted to $E\setminus (E_1\cup\ldots\cup E_{i-1})$. One easily checks that, to each such eigenfunction, one can build an element $\eta'_{i,\rho,p}$ of $\ker (P^d-r(P)^d I)^{i}$ on $E$ such that $\eta$ coincides with $\eta_{i,\rho,\delta}$  on $E_{i,\rho,\delta}$. Since $P$ is quasi-compact, $\sum_{i,d}\ker (P^d-r(P)^d I)^{i}$ is finite dimensional in $B$, and hence, since the $\eta'_{i,\rho,p}$ form a free family in $B$, we deduce that 
		\[
		p\leq \dim\left(\sum_{i,d}\ker (P^d-r(P)I)^{i}\right)<\infty.
		\]
		
			\medskip\noindent \textit{Step~3. Convergence at times multiples of $d$.} We are now in position to apply \cite[Theorem~4.1]{ChampagnatVillemonais2024} to the kernel $P^d$. Indeed, the state space $E$ can be decomposed in $E_0=\cup_i E'_i$ and the subsets $E_{i,\rho,\delta}$. Assumption~(B1) therein holds with $W_{i,\rho,\delta}=V_{\vert E_{i,\rho,\delta}}$ according to~\eqref{eq:criticalsink}, Assumption~(B2) holds since communication from  the set $E_{i,\rho,\delta}$ to the set $E_{i',\rho',\delta'}$ is only possible (but not necessarily) if $i<i'$, Assumption (B3) is easily obtained from~\eqref{eq:sink} with $W_0=V_{\vert E_0}$ and, finally, Assumption~(B4) holds since $\sum_i W_i=V$ and $P$ is a bounded operator in $L^\infty(V)$. We deduce from~\cite[Theorem~4.1]{ChampagnatVillemonais2024} that the process $(X_{nd})_{n\geq 0}$ satisfies Assumption~(A) there: 
			there exists an integer $d\geq 1$, an integer valued function $j:E\to \mathbb N$, a finite set $I$ and some probability measures $\nu_{i}$ on $E$ such that $\nu_i(V)<+\infty$ and non-identically zero non-negative functions $\eta_{i}\in L^\infty(V)$ for each $i\in I$, such that, for all $f\in L^\infty(V)$, all $n\geq 1$ and all $x\in
			E$,
			\begin{equation}
				\label{eq:etaproof}
				\left|r(P)^{-nd} (nd)^{-j(x)} \mathbb E_x(f(X_{dn})\mathbf 1_{nd< \tau_\partial})-\sum_{i\in I} \eta_{i}(x)\nu_{i}(f)\right|\leq \alpha_{n} V(x)\|f\|_{V},
			\end{equation}
			where $\alpha_{n}$ goes to $0$ when $n\to+\infty$. In addition, the fact that $r(P^d)=r(P)^d$ and \cite[Theorem~4.1]{ChampagnatVillemonais2024} also entails the last statement of Theorem~\ref{thm:quasi-compact-and-QSD}.
	
		\medskip\noindent\textit{Step 4. Conclusion.} Let $\nu_{i,k}=r(P)^{-k}\nu_i P^k$, so that $\nu_{i,k+d}=\nu_{i,k}$ for all $i,k$. Applying~\eqref{eq:etaproof} to $P^k f$ instead of $f$, we get
		\begin{equation*}
			\left|r(P)^{-nd} (nd)^{-j(x)} \mathbb E_x(f(X_{dn+k})\mathbf 1_{nd+k< \tau_\partial})-\sum_{i\in I} \eta_{i}(x)\nu_{i,k}(f)\right|\leq \alpha_{n} V(x)\|f\|_{V}.
		\end{equation*}
		and hence, setting $\eta_{i,k}=r(P)^{-k}\eta_i$,
		\begin{equation}
			\label{eq:etaproofandk}
			\left|r(P)^{-nd-k} (nd)^{-j(x)} \mathbb E_x(f(X_{dn+k})\mathbf 1_{nd+k< \tau_\partial})-\sum_{i\in I} \eta_{i,k}(x)\nu_{i,k}(f)\right|\leq r(P)^{-d}\alpha_{n} V(x)\|f\|_{V}.
		\end{equation}
		In addition, we have
		\begin{align*}
			r(P)^{-nd-k} & \left|(nd)^{-j(x)}- (nd+k)^{-j(x)} \right|\mathbb E_x(|f(X_{dn+k})|\mathbf 1_{nd+k< \tau_\partial})\\
			&\leq r(P)^{-nd-k}(nd)^{-j(x)} \mathbb E_x(|f(X_{dn+k})|\mathbf 1_{nd+k< \tau_\partial})\left(\frac{(nd+k)^{-j(x)}}{(nd)^{-j(x)}}-1\right)\\
			&\leq \left(\sum_{i\in I} \eta_{i,k}(x)\nu_{i,k}(|f|)+r(P)^{-d}\alpha_n V(x)\|f\|_V\right)\left(\frac{(nd+k)^{-\max j}}{(nd)^{-\max j}}-1\right)\\
			&\leq r(P)^{-d}\left(\sum_{i\in I} \|\eta_i\|_V V(x)\,\nu_{i,k}(V)\|f\|_V+\max_m \alpha_m V(x)\|f\|_V\right)\left(\frac{(nd+k)^{-\max j}}{(nd)^{-\max j}}-1\right),
		\end{align*}
		where $\frac{(nd+k)^{-\max j}}{(nd)^{-\max j}}-1\to 0$ when $n\to+\infty$.  This and~\eqref{eq:etaproofandk} allows us to conclude the proof (up to a change in $\alpha_n$).
\end{proof}

We conclude with two interesting special cases, first  when the kernel is irreducible in the sense of Definition~\ref{def:irr} below, and the other one when the state space is topologically irreducible with  a lower semi-continuous type property.

\begin{definition}
	\label{def:irr}
	We say that the kernel $(P_n)_{n\in\mathbb N}$ is \textit{totally irreducible} if, for all $y\in E$ and $A\in\mathcal E$,
	\begin{align*}
		\exists n\geq 1\,\text{ such that }\,P^n\mathbf 1_A(y)>0 \Longrightarrow \forall x\in E,\ \exists n\geq 1\,\text{ such that }\,P^n\mathbf 1_A(x)>0 .
	\end{align*}
\end{definition}

For a totally irreducible kernel, the set $E_{1,1,1}$ in the proof of Theorem~\ref{thm:quasi-compact-and-QSD} is equal to $E$, which immediately entails the following corollary.

\begin{corollary}
	\label{cor:totirr}
	If, in addition to the assumptions of Theorem~\ref{thm:quasi-compact-and-QSD}, the kernel $P$ is totally irreducible, then  there exists an integer $d\geq 1$, a non-zero measure $\nu$ on $E$ such that $\nu(V)<+\infty$ and a positive  function $\eta\in L^\infty(V)$ such that, for all $f\in L^\infty(V)$, all $n\geq 1$, all $k\in\{0,1,\ldots,d-1\}$ and all $x\in
	E$,
	\begin{equation}
		\label{eq:etacor1}
		\left|r(P)^{-dn-k} P^{n+k}f(x)- \eta(x)\nu_k(f)\right|\leq C\,\rho^{n} V(x)\|f\|_{V},
	\end{equation}
	where $\nu_{k}=\nu P^k$, $C>0$ and $\rho\in (0,1)$. 
\end{corollary}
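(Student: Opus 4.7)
The plan is to invoke Theorem~\ref{thm:quasi-compact-and-QSD} and then use total irreducibility to prune its output to the simpler form of the corollary.

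First, I would revisit the proof of Theorem~\ref{thm:quasi-compact-and-QSD}: quasi-compactness of $P$ produces a non-negative eigenfunction $\eta_1 \in L^\infty(V)$ with $P\eta_1 = r(P)\eta_1$. Set $A := \{\eta_1 > 0\}$. For $x \notin A$, the identity $P\eta_1(x) = r(P)\eta_1(x) = 0$ together with $\eta_1 \geq 0$ forces $P(x, A) = 0$, and by iteration $P^n\mathbf{1}_A(x) = 0$ for every $n \geq 1$. For $y \in A$, the same identity gives $P\eta_1(y) > 0$, hence $P(y, A) > 0$ since $\eta_1$ is supported on $A$. Applying Definition~\ref{def:irr} to the set $A$ therefore forces $A = E$, so $\eta_1 > 0$ everywhere on $E$.

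Second, I would inspect the cyclic decomposition from Step~1 of the proof of Theorem~\ref{thm:quasi-compact-and-QSD}, which (after the $\eta_1$-transform, under which total irreducibility is preserved since $\eta_1 > 0$) partitions $E$ into finitely many pairwise disjoint closed cyclic families $E_\rho = \bigcup_\delta E_{\rho,\delta}$, $\rho = 1, \ldots, r$, together with a transient residue $E'$. Taking now $A'$ to be the first closed family, total irreducibility applied to $A'$ rules out the existence of any other closed family: a point $x \in E_\rho$ with $\rho \neq 1$ would satisfy $P^n\mathbf{1}_{A'}(x) = 0$ for all $n$, contradicting Definition~\ref{def:irr}. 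An analogous choice of $A'$ inside the unique closed family rules out $E' \neq \emptyset$. Hence $r = 1$, Step~2 of the proof of Theorem~\ref{thm:quasi-compact-and-QSD} terminates immediately at level $p = 1$, the index set $I$ reduces to a singleton, and no Jordan-type layering between inductive levels can arise, so $j \equiv 0$.

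Third, with $|I| = 1$, $j \equiv 0$ and $\eta := \eta_1 > 0$ on $E$, the decomposition~\eqref{eq:etathm} reduces to the form announced in~\eqref{eq:etacor1}, except that the remainder $\alpha_n$ is only known to tend to $0$. To upgrade it to a geometric rate $C\rho^n$, I would return to the Hennion-type decomposition invoked in Step~1 of the proof of Theorem~\ref{thm:quasi-compact-and-QSD}, which already provides a geometric operator-norm bound of order $\rho^n$ for some $\rho \in (0,1)$. Since the inductive Step~2 of that proof is vacuous in the totally irreducible setting, this geometric rate transfers directly, through the $V$- and $\eta_1$-transforms, to the stated estimate~\eqref{eq:etacor1}.

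The main obstacle is the book-keeping needed to apply Definition~\ref{def:irr} to the various measurable sets built in the proof of Theorem~\ref{thm:quasi-compact-and-QSD}, in particular showing that any further closed cyclic family and any transient residue must be empty under the irreducibility hypothesis; once this is done, and once one notes that irreducibility passes through the strictly positive $V$- and $\eta_1$-transforms, the corollary follows as a direct specialization of Theorem~\ref{thm:quasi-compact-and-QSD}.
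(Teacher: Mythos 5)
Your overall strategy coincides with the paper's: the corollary is obtained by showing that, under total irreducibility, the state-space decomposition built in the proof of Theorem~\ref{thm:quasi-compact-and-QSD} collapses. Your first step is correct and is a useful elaboration: since $A=\{\eta_1>0\}$ satisfies the premise of Definition~\ref{def:irr} while no point outside $A$ ever reaches $A$, one gets $\eta_1>0$ on all of $E$, hence $E_1=E$, the inductive Step~2 of the theorem's proof stops at $p=1$, and $j\equiv 0$. The elimination of a second closed cyclic family is also correct. Two points, however, do not hold up. First, total irreducibility does \emph{not} force the transient residue $E'$ to be empty: for $E=\{0,1\}$ with $P(0,\cdot)=P(1,\cdot)=\delta_1$, the kernel is totally irreducible (the premise of Definition~\ref{def:irr} fails for every $A$ with $P^n\mathbf 1_A\equiv 0$, in particular for $A=\{0\}$), yet the state $0$ is never revisited. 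This error is harmless here, because the Hennion bound~\eqref{eq:decomp} is an operator-norm estimate on all of $E_1=E$ and the limit on the residue is still governed by the functions $U_{\rho,\delta}$; but the step as written is false.

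Second, and this is the genuine gap: you pass from ``$r=1$'' (a single closed family) to ``$|I|=1$'' without addressing the period $d_1$ of that family. In Theorem~\ref{thm:quasi-compact-and-QSD} the index set $I$ enumerates the cyclic \emph{classes} $E_{\rho,\delta}$, not the closed families: with $r=1$ but $d_1=d>1$ one obtains $d$ measures $m_{1,\delta}$ and $d$ functions $\eta_1 U_{1,\delta}$, each positive only on its own class, and the limit of $r(P)^{-nd}P^{nd}f(x)$ depends on which cyclic class contains $x$; it is not of the form $\eta(x)\nu(f)$ with a single $\nu$ and an everywhere positive $\eta$. What the paper actually asserts --- and what the corollary needs --- is the stronger statement that the single cyclic class $E_{1,1,1}$ equals all of $E$, i.e.\ that the collapsed decomposition is also aperiodic. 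Your proposal never proves this, and it is the delicate point: Definition~\ref{def:irr} as stated does not obviously exclude periodic behaviour (the deterministic two-point swap is totally irreducible in this sense), so a complete proof must either derive $E_{1,1,1}=E$ from the hypotheses or explain precisely how the residues $k$ and the measures $\nu_k=\nu P^k$ in~\eqref{eq:etacor1} are meant to absorb the cyclic structure. As it stands, the reduction to a single pair $(\eta,\nu)$ is unjustified.
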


We focus now on the situation where $E$ is a separable topological space endowed with its Borel $\sigma$-field $\mathcal E$, $P$ is topologically irreducible (see the definition below) and $P$ has some lower semi-continuous property.

\begin{definition}
	\label{def:irrtop}
	We say that the $P$  is \textit{topologically irreducible} if, for all open set $O\subset E$, and all $x\in E$, there exists $n\geq 1$ such that $P^n\mathbf 1_O(x)>0$.
\end{definition}

The following result is then an immediate consequence of Corollary~\ref{cor:totirr}.

\begin{corollary}
	\label{cor:totirrtop}
	If, in addition to the assumptions of Theorem~\ref{thm:quasi-compact-and-QSD}, $E$ is a topological space, $P$ is  topologically irreducible and, for all $A\in\mathcal E$ and all $y\in E$,
	\begin{align*}
	\exists n\geq 1\,\text{ such that }\,P^n\mathbf 1_A(y)>0  \Longrightarrow 	\liminf_{x\to y} \ \sup_{n\geq 1} P^n\mathbf 1_A(x)>0,
	\end{align*}
	then the conclusions of Corollary~\ref{cor:totirr} hold.
\end{corollary}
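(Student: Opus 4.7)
My plan is to deduce Corollary~\ref{cor:totirrtop} directly from Corollary~\ref{cor:totirr} by checking that, under the present hypotheses, $P$ is totally irreducible in the sense of Definition~\ref{def:irr}. Concretely, I would fix $y\in E$ and $A\in\mathcal E$ with $P^n\mathbf 1_A(y)>0$ for some $n\geq 1$, and aim to show that for every $x\in E$ there exists $n'\geq 1$ with $P^{n'}\mathbf 1_A(x)>0$; Corollary~\ref{cor:totirr} then applies directly.

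To turn the countable family $(P^n\mathbf 1_A)_{n\geq 1}$ into a single object, I would introduce the bounded measurable function
\[
h:=\sum_{n\geq 1}2^{-n}(1\wedge P^n\mathbf 1_A),
\]
which is strictly positive precisely where $\sup_{n\geq 1} P^n\mathbf 1_A > 0$. The lower semi-continuity type hypothesis applied at $y$ then supplies an open neighborhood $O$ of $y$ and some $\varepsilon>0$ such that $\sup_{n\geq 1}P^n\mathbf 1_A(z)>\varepsilon$ for every $z\in O$, hence $h>0$ throughout $O$. For an arbitrary $x\in E$, topological irreducibility (Definition~\ref{def:irrtop}) then provides an $m\geq 1$ with $P^m\mathbf 1_O(x)>0$. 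Decomposing $O$ as the countable union of level sets $\{h\geq 1/k\}\cap O$ and picking one on which $P^m(x,\cdot)$ has positive mass yields $P^m h(x)>0$, and a monotone convergence interchange gives
\[
0<P^m h(x)=\sum_{n\geq 1}2^{-n}P^m(1\wedge P^n\mathbf 1_A)(x)\leq\sum_{n\geq 1}2^{-n}P^{m+n}\mathbf 1_A(x),
\]
forcing $P^{m+n}\mathbf 1_A(x)>0$ for at least one $n\geq 1$. This is precisely total irreducibility.

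The only mildly delicate step I anticipate is the transfer from the pointwise positivity $h>0$ on $O$ to the integral positivity $P^m h(x)>0$, handled by the level set truncation just outlined; everything else is a straightforward chaining of the two irreducibility notions through the auxiliary function $h$, after which Corollary~\ref{cor:totirr} delivers the announced conclusion.
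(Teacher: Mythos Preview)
Your argument is correct and follows exactly the route the paper intends: the paper states that Corollary~\ref{cor:totirrtop} is ``an immediate consequence of Corollary~\ref{cor:totirr}'', and you have supplied the natural verification that topological irreducibility together with the lower semi-continuity hypothesis forces total irreducibility. The auxiliary function $h$ and the level-set truncation are a clean way to pass from pointwise positivity on $O$ to $P^m h(x)>0$; nothing is missing.
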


\subsection{Continuous time semigroups}
\label{sec:QSDbyQCcont}

Let $E$ be a measurable space state space  endowed with a countably generated $\sigma$-field $\mathcal E$, and let  $(P_t)_{t\in[0,+\infty)}$ be a continuous time semigroup such that, for all $t>0$, $P_t$ is a non-negative kernel on $E$.

\begin{theorem}
	\label{thm:generalcont}
	Assume that, for all bounded measurable function $f\geq 0$,  the map $t\in[0,+\infty) \mapsto P_tf(x)$ is measurable and that there exists $T>0$ and a measurable function $V:E\to(0,+\infty)$ such that  $P_T$ is a quasi-compact operator on $L^\infty(V)$. Assume in addition that  $P_t V\leq C_TV$ for all $t \in[0,T]$ and some constant $C_T>0$. Then
	\begin{equation}
		\label{eq:etathmcont}
		\left|\frac{r(P_1)^{-t}}{(1+t)^{j(x)}} P_t f(x)- \sum_{i\in I}\eta_i(x)\nu_i(f)\right|\leq \alpha_t V(x)\|f\|_{V},
	\end{equation}
	where $\alpha_t\to 0$ when $t\to+\infty$, and where the objects $\eta_i,\nu_i,j$ are the same as in the statement of Theorem~\ref{thm:quasi-compact-and-QSD} applied to $P_T$. 
\end{theorem}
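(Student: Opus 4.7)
The plan is to apply Theorem~\ref{thm:quasi-compact-and-QSD} to the discrete iterates $P_T^n=P_{nT}$ and then to extend the resulting asymptotics to arbitrary $t\geq 0$ via the semigroup identity $P_{nT+s}=P_{nT}P_s$ together with the bound $\|P_sf\|_V\leq C_T\|f\|_V$ for $s\in[0,T]$. Writing $\lambda:=r(P_1)$, the semigroup relation $P_t^q=P_{qt}$ yields $r(P_T)=\lambda^T$. Theorem~\ref{thm:quasi-compact-and-QSD} produces an integer $d\geq 1$, a function $j$ and families $(\eta_i,\nu_i)_{i\in I}$ yielding convergence only along arithmetic progressions of step $T/d$. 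The central obstacle is aperiodicity: for~\eqref{eq:etathmcont} to hold with a $t$-independent right-hand side, one must show that $d=1$.

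\textbf{Aperiodicity.} Let $G:=\operatorname{span}\{\eta_i,i\in I\}\subset L^\infty(V)$. By commutativity of the semigroup, $G$ is invariant under every $P_t$, and measurability of $t\mapsto P_tf(x)$ together with $\dim G<\infty$ forces $(P_t|_G)_{t\geq 0}=(e^{tA}|_G)_{t\geq 0}$ for some real matrix $A$, hence continuous in $t$. Since the $\eta_i$'s have pairwise disjoint non-negative supports and each $P_t$ is a positive kernel, every $P_t|_G$ is entrywise non-negative in the basis $\{\eta_i\}$; equivalently $A$ is Metzler in this basis. By the Perron--Frobenius theorem for Metzler matrices, applied to each irreducible component of $A$, the eigenvalues of $A$ of maximum real part are real, so the peripheral spectrum of $P_T|_G=e^{TA}|_G$ consists of positive real numbers. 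On the other hand, Theorem~\ref{thm:quasi-compact-and-QSD} shows that this peripheral spectrum equals $\{r(P_T)\zeta:\zeta^{d_i}=1\}$ with $d_i\geq 1$ the cyclic period of the $i$-th block. Since all these values must be real positive, all $d_i=1$, hence $d=1$ and $P_T\eta_i=\lambda^T\eta_i$ for every $i\in I$.

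\textbf{Continuous-time convergence.} With $d=1$, $\lambda^{-T}P_T|_G$ is the identity on $G$; the same Metzler argument applied to the non-negative measurable semigroup $(\lambda^{-t}P_t|_G)_{t\geq 0}$ then yields $\lambda^{-t}P_t|_G=I_G$, i.e.\ $P_t\eta_i=\lambda^t\eta_i$ for all $t\geq 0$ and $i\in I$. Evaluating the discrete convergence on $f=\eta_j$ and using $P_{nT}\eta_j=\lambda^{nT}\eta_j$ gives the biorthogonality $\nu_i(\eta_j)=\delta_{ij}$. For any $s\geq 0$ the measure $\nu_iP_s$ satisfies $(\nu_iP_s)P_T=\lambda^T(\nu_iP_s)$, so it lies in the finite-dimensional left eigenspace of $P_T$ for $\lambda^T$, which is spanned by the linearly independent family $\{\nu_j:j\in I\}$; decomposing $\nu_iP_s=\sum_j c_{ij}(s)\nu_j$ and evaluating at $\eta_k$ yields $c_{ij}(s)=\lambda^s\delta_{ij}$, hence $\nu_iP_s=\lambda^s\nu_i$. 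For arbitrary $t>0$, write $t=nT+s$ with $s\in[0,T)$; since $\|P_sf\|_V\leq C_T\|f\|_V$, applying Theorem~\ref{thm:quasi-compact-and-QSD} to $P_T^n$ at $g=P_sf$ and using $\nu_i(P_sf)=\lambda^s\nu_i(f)$ gives
\[
\bigl|\lambda^{-t}n^{-j(x)}P_tf(x)-\sum_{i\in I}\eta_i(x)\nu_i(f)\bigr|\leq C_T\lambda^{-s}\alpha_n V(x)\|f\|_V.
\]
Because $j\equiv 0$ on each $E_i$ on which the $\eta_i$ are supported and $(n/(1+t))^{j(x)}$ stays bounded as $t\to+\infty$, replacing $n^{-j(x)}$ by $(1+t)^{-j(x)}$ only alters the error by a bounded factor, and the resulting error $\alpha_t\to 0$ gives~\eqref{eq:etathmcont}.
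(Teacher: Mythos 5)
Your overall architecture is the same as the paper's: apply Theorem~\ref{thm:quasi-compact-and-QSD} to the discrete skeleton, show that the finite-dimensional ``peripheral'' part of the semigroup cannot be periodic by exploiting measurability in $t$, and then interpolate to all times using $P_tV\leq C_TV$. However, the aperiodicity step, which is the heart of the proof, has a genuine gap. You assert that $G=\operatorname{span}\{\eta_i\}$ is invariant under every $P_t$ ``by commutativity of the semigroup''. Commutativity only shows that $P_t$ preserves the generalized eigenspace $\ker(P_T^d-r(P_T)^d I)^m$ of $P_T^d$ (and even this presupposes that the $\eta_i$ are generalized eigenfunctions, which Theorem~\ref{thm:quasi-compact-and-QSD} does not state -- the normalization $(dn+k)^{j(x)}$ signals possible Jordan structure). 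That eigenspace may be strictly larger than $G$, so $P_tG\subseteq G$ does not follow. The paper circumvents this by working on the dual side with the measures $\nu_i$, which genuinely satisfy $\nu_iP^d=r(P)^d\nu_i$, and by deriving the invariance relation $r(P_1)^{-h}\nu_jP_h=\sum_i[A_h]_{ji}\nu_i$ directly from the convergence estimate: one computes the limit of $r(P_1)^{-dn}(dn)^{-j(x)}P_{dn+h}f(x)$ in two ways (replacing $f$ by $P_hf$, and integrating the estimate against $\delta_xP_h$) and restricts to $x\in E_j$, where $j(x)=0$ and $\eta_{i}(x)=0$ for $i\neq j$. Without this (or an equivalent) derivation, your finite-dimensional reduction is not justified.

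Two further points in the same step need repair. First, ``measurability plus $\dim G<\infty$ forces $P_t|_G=e^{tA}$'' is false for general measurable matrix semigroups (a constant projection for $t>0$ is a measurable semigroup not of exponential form); you need invertibility of $P_t|_G$, which the paper obtains from $A_{nd}=\mathtt I$ and $A_tA_{nd-t}=\mathtt I$, and the passage from measurable to continuous is exactly where the paper invokes Pettis' theorem (after using DeMarr's factorization of a non-negative matrix with non-negative inverse into diagonal times permutation). Second, the $\eta_i$ do \emph{not} have pairwise disjoint supports in general -- they only satisfy $\eta_{i'}=0$ on $E_i$ for $i'\neq i$, while several $\eta_i$ may be positive off $\bigcup_iE_i$; the entrywise non-negativity of $P_t|_G$ in the basis $\{\eta_i\}$ can still be extracted by restricting to the sets $E_i$, but not for the reason you give. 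Once these points are fixed, your Metzler/Perron--Frobenius route to $d=1$ and your treatment of $\nu_iP_s=\lambda^s\nu_i$ and of the final normalization $(1+t)^{-j(x)}$ would go through, and would constitute an acceptable variant of the paper's diagonal-times-permutation argument.
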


\begin{remark}
	\label{rem:2notablefacts}
	There are two notable consequences of the above result on positive continuous time semigroups $(P_t)_{t\geq 0}$ on an $L^\infty(V)$ space. First,  if $P_T$ is quasi-compact for some $T>0$ and $P_t V\leq C_T V$ for all $t\in[0,T]$ and some constant $C_T>0$, then $P_t$ is quasi-compact for all $t> 0$. Second, under theses conditions, $(P_t)_{t\geq 0}$ cannot have a periodic behaviour, contrarily to the discrete time setting, so that, in Theorem~\ref{thm:quasi-compact-and-QSD} applied to $P_T$, one can take $d=1$.
	As far as we know, Thorem~\ref{thm:generalcont} and these two consequences are new also in the context of conservative semigroups.
\end{remark}

\begin{remark}
	Since the above theorem relies on the property of the discrete  time semigroup $(P_{nT})_{n\geq 0}$, all the discrete time setting results (Corollaries~\ref{cor:totirr} and~\ref{cor:totirrtop} and the results of Section~\ref{sec:Lyap}), are useful in the context of continuous time semigroups.
\end{remark}

\begin{remark}
	If $(P_t)_{t\in[0,+\infty)}$ is not measurable in $t$, one can also adapt the discrete time results of the previous sections to the continuous time setting following the classical lines of~\cite{MeynTweedie1993a} or, in the context of quasi-stationary distributions,~\cite{ChampagnatVillemonais2017a}. For instance, 
 the result holds with $I$ of cardinality $1$ if the process satisfies in addition the Doeblin/Lyapunov conditions~\cite[Assumptions~F0, F1, F2]{ChampagnatVillemonais2017a}. The treatment of this situation is very similar to this reference, and thus we do not detail it any further.
	Following this direction thus allows to replace the Harnack type condition F3 in~\cite{ChampagnatVillemonais2017a} with a quasi-compactness condition, which, in some cases and thanks to Theorem~\ref{thm:QCbyDOM}, can be much easier to check.
\end{remark}

\begin{proof}[Proof of Theorem~\ref{thm:generalcont}]
	Assume without loss of generality that $T=1$. According to Theorem~\ref{thm:quasi-compact-and-QSD} applied to the kernel $P=P_1$, we deduce that there exist an integer $d\geq 1$, an integer valued bounded function $j:E\to \mathbb N$, a finite set $I$ and some non-negative measures $\nu_{i}$ on $E$ such that $\nu_i(V)<+\infty$ and non-identically zero non-negative functions $\eta_{i}\in L^\infty(V)$ for each $i\in I$, such that, for all $f\in L^\infty(V)$, all $n\geq 1$ and all $x\in
	E$,
	\begin{equation}
		\label{eq:etathmconttime}
		\left|\frac{r(P_1)^{-dn}}{(dn)^{j(x)}} P_{dn}f(x)-\sum_{i\in I} \eta_{i}(x)\nu_{i}(f)\right|\leq \bar\alpha_{nd} V(x)\|f\|_{V},
	\end{equation}
	where  $\bar\alpha_{n}$ goes to $0$ when $n\to+\infty$.

For any fixed $j\in I$, any $h\geq 0$ and $x\in E_{j}$, we deduce, replacing $f$ by $P_h f$ in the above inequality and using the fact that $\eta_i(x)=0$ for all $i\neq j$,  that
\begin{equation*}
	\left|\frac{r(P_1)^{-dn}}{(dn)^{j(x)}} P_{dn+h}f(x)- \eta_{j}(x)\nu_{j}(P_h f)\right|\leq \bar\alpha_{nd} V(x)\|P_h f\|_{V}.
\end{equation*}
Similarly, integrating~\eqref{eq:etathmconttime} with respect to  $\delta_x P_h$, we obtain
\begin{equation*}
	\left|\frac{r(P_1)^{-dn}}{(dn)^{j(x)}} P_{dn+h}f(x)-\sum_{i}\delta_x P_h \eta_{i}\,\nu_{i}(f)\right|\leq \bar\alpha_{nd} \delta_x P_hV\|f\|_{V}.
\end{equation*}
Since $P_h V(x)<\infty$ by assumption, we deduce from the two previous inequalities that, for all $h\geq 0$, 
\begin{align}
	\label{eq:etajdecomp}
\eta_{j}(x)\,\nu_{j}(P_hf)=\sum_{i\in I} \delta_x P_h \eta_{i}\,\nu_{i}(f).
\end{align}
In particular, there exists a finite non-negative matrix $A_h$ indexed by $I\times I$ such that
\[
r(P_1)^{-h} \nu_{j}P_h =\sum_{i\in I}[A_h]_{ji} \nu_i.
\]
Using the vectorized notation $\nu$ for the (column) vector $(\nu_j)_{j\in I}$, we deduce that
\[
r(P_1)^{-h} \nu P_h= A_h \nu,\text{ so that }r(P_1)^{-s-t} \nu P_sP_t= r(P_1)^{-t} A_s \nu P_t=A_s A_t \nu.
\]
Since $(\nu_j)_{j\in I}$ forms a free family and since $r(P)^{-s-t} \nu P_sP_t=r(P)^{-s-t} \nu P_{s+t}=A_{s+t}\nu$, we observe that $A_t$ is uniquely determined for all $t>0$ and hence that, for all $s,t\geq 0$, 
\[
A_sA_t=A_{s+t}.
\]
We claim that, denoting $\mathtt I$ the identity matrix indexed by $I\times I$, 
\begin{align}
	\label{eq:Aisconstant}
	A_t=\mathtt I, \quad\forall t\geq 0.
\end{align}
Once this is proved, we can conclude: it implies that, for all $h\in[0,d]$, $\nu_j P_h =r(P_1)^h \nu_j$ for all $j\in E$ and hence, from~\eqref{eq:etathmconttime} applied to $P_h f$ instead of $f$, that
\begin{align*}
	\left|\frac{r(P_1)^{-dn}}{(dn)^{j(x)}} P_{dn+h}f(x)-r(P_1)^h\sum_{i\in I} \eta_{i}(x)\nu_{i}(f)\right|&\leq \bar \alpha_{nd} V(x)\|P_h f\|_{V}\\
	&\leq C_T \bar\alpha_{nd} V(x)\| f\|_{V}.
\end{align*}
This entails~\eqref{eq:etathmcont} (under the assumption that $T=1$) with
\[
\alpha_t:=C\,\bar\alpha_{\lfloor t/nd \rfloor nd}+{C}{(1+t)^{-\|j\|_{\infty}-1}}\mathbf 1_{\|j\|_{\infty}\geq 1},
\]
for some constant $C>0$. Without assuming that $T=1$, we thus get after time rescaling
\begin{equation*}
	\left|\frac{r(P_T)^{-t/T}}{(1+t/T)^{j(x)}} P_t f(x)- \sum_{i\in I}\eta_i(x)\nu_i(f)\right|\leq \alpha_{t/T} V(x)\|f\|_{V},
\end{equation*}
This implies that $P_1$ is quasi-compact, and hence we can actually take $T=1$, which proves~\eqref{eq:etathmcont}.

It remains to prove~\eqref{eq:Aisconstant}. Since $r(P)^{-nd}\nu P_{dn}=\nu$ for all $n\geq 0$, we have 
\[
A_{nd}=\mathtt I,\ \forall n\geq 0.
\]
As a consequence, for all $t\geq 0$, $A_tA_{\lfloor t/d\rfloor d+d-t }=\mathtt I$ and hence $A_t$ is invertible with inverse $A_{\lfloor t/d\rfloor d+d-t }$, which is a matrix with non-negative entries. In particular, we deduce from~\cite{DeMarr1972} that, for all $t\geq 0$, $A_t$ is the product of a diagonal matrix $D_t$ and a permutation matrix $Q_t$. Since this decomposition is unique, we deduce from the properties of $(A_t)_{t\geq 0}$ that, for all $s,t\geq 0$,
\[
Q_{s+t}=Q_sQ_t.
\]
Setting $Q_t:=Q_{-t}^{-1}$ for all $t<0$, the family $(Q_t)_{t\in\mathbb R}$ defines a measurable homeomorphism between the topological groups $(\mathbb R,+)$ and $(S_I,\times)$ (where $S_I$ denotes the group of permutation matrices indexed by $I\times I$). Since $\mathbb R$  has the Baire property and $S_I$ is separable, we deduce from Pettis' theorem, and more precisely from its corollary \cite[Theorem~9.10]{Kechris2012}, that $Q$ is continuous in $t\in \mathbb R$. Since the set $S_I$ is discrete, we deduce that $Q$ is constant and hence that
\[
Q_t=Q_0=\mathtt I,\ \forall t\geq 0.
\]
This and the semigroup property for $A$ entails that $D_{s+t}=D_s D_t$ for all $s,t\geq 0$ and since $t\mapsto D_t$ is bounded over finite intervals (since this is clearly the case for $A_t$), we deduce that the diagonal elements of $D_t$ are of the form $e^{c_i t}$ for some $c_i\in\mathbb R$, $i\in \mathtt I$. Since $D_{nd}=\mathtt I$ for all $n\geq 0$, we deduce that $c_i=0$ for all $i\in \mathtt I$ and hence that 
\[
D_t=\mathtt I,\ \forall t\geq 0.
\]
We have thus proved the claim~\eqref{eq:Aisconstant}, which concludes the proof of Theorem~\ref{thm:generalcont}.
\end{proof}

\section{Applications to quasi-stationary distributions}

\label{sec:applis}

This final section applies the preceding operator‑theoretic results to absorbed Markov processes, providing a unified framework for quasi‑stationary distributions in very general settings.  In particular, we recover and extend strong Feller Lyapunov frameworks from the literature, showing that they fit into a more general domination picture and that similar conclusions hold for non‑Feller and dominated processes that were previously out of reach.

Let $(X_t)_{t\in \mathbf I}$, with $\mathbf I=\mathbb N$ or $\mathbf I=[0,+\infty)$, be a discrete time or continuous time absorbed Markov process on a countably generated measurable space $E\cup\{\partial\}$ with $\partial\notin E$ absorbing, meaning that $X_t=\partial$ for all $t\geq \tau_\partial:=\inf \{s\geq 0,\ X_s=\partial\}$ (see~\cite[Chapter~III]{RogersWilliamsVol12000}, Definition~(1.1) and Section~3 for the continuous time setting). Define its semigroup $(P_t)_{t\in \mathbf I}$ by
\[
P_tf(x):=\mathbb E_x(f(X_t)\mathbf 1_{t<\tau_\partial}),\ \forall x\in E,\ f\in \mathcal B_b(E).
\]

The aim of this section is to apply the results of the previous sections to the quasi-stationary behaviour of $X$, that is on providing estimates on the limiting behaviour of $\mathbb P_x(X_t\in\cdot,\,t<\tau_\partial)$ using Theorems~\ref{thm:quasi-compact-and-QSD} and~\ref{thm:generalcont} by identifying situations where $P_T$ is quasi-compact in  $L^\infty(V)$ space for some $V:E\to(0,+\infty)$ and some $T>0$. 

In order to do so, we first observe that, if $\mathbf I=\mathbb N$,
 the property~\eqref{eq:etathm} translates into
	\begin{equation}
		\label{eq:QSDdisctime}
		\left|\frac{r(P)^{-dn-k}}{(dn+k)^{j(x)}} \mathbb E_x(f(X_{dn+k})\mathbf 1_{nd+k< \tau_\partial})-\sum_{i\in I} \eta_{i,k}(x)\nu_{i,k}(f)\right|\leq \alpha_{nd+k} V(x)\|f\|_{V}.
	\end{equation}
	If in addition $V\geq \mathbf 1_E$, then each $\nu_i$, $i\in I$, can be chosen as a probability measure and is a so-called quasi-stationary distribution for $X$, meaning that
	\[
	\mathbb P_{\nu_i}(X_n\in\cdot\mid n<\tau_\partial)=\nu_i,\ \forall n\geq 0.
	\]
		We refer the reader to~\cite{ChampagnatVillemonais2024}, where the authors study quasi-stationary distributions of processes satisfying~\eqref{eq:QSDdisctime}.
		
Similarly, if $\mathbf I=[0,+\infty)$, the property~\eqref{eq:etathmcont}
translates into
\begin{equation}
	\label{eq:QSDconttime}
			\left|\frac{r(P_1)^{-t}}{(1+t)^{j(x)}} \mathbb E_x(f(X_{t})\mathbf 1_{t< \tau_\partial})- \sum_{i\in I}\eta_i(x)\nu_i(f)\right|\leq \alpha_t V(x)\|f\|_{V},
\end{equation}
In both cases, this entails that, for all $x\in E$ such that the limiting term is non zero, $\mathbb P_x(X_t\in \cdot\mid t<\tau_\partial)$ converges when $t\to+\infty$ to a measure. In addition, if $V\geq 1$, then this measure is a probability measure and hence a quasi-stationary distribution (see e.g.~\cite{Meleard2012}).

In Section~\ref{sec:exadis}, we consider discrete time lazy Markov chains with killing. In Section~\ref{sec:exadensity}, we study continuous time processes with locally uniformly integrable densities. In Section~\ref{sec:exaFeller}, we treat the case of continuous time processes with a semigroup dominated by a strongly Feller kernel.

\subsection{Lazy Markov chain with killing}
\label{sec:exadis}

Let $(E,\mathcal E)$ be measurable spaces with $\mathcal E$ countably generated and let $R$ be a Markov transition kernel on $E$, and let $\partial \notin E$. We consider the Markov process $X$ on $E\cup\{\partial\}$ with the following transition dynamics:
\[
\mathbb P_x(X_{1}\in\cdot)=\rho_R(x)R(x,\cdot)+\rho_\delta(x)\,\delta_{x}+\rho_\partial(x)\delta_\partial,
\]
where $\rho_R$, $\rho_\delta$ and $\rho_\partial$ are non-negative measurable functions on $E$ such that $\rho_R+\rho_\delta+\rho_\partial\equiv 1$, and $\rho_\partial(\partial)=1$.
Said differently, this is a simple Markov process which, when at a point $x$, evolves according to the transition $R$ with probability $\rho(x)$, is lazy (i.e. it stays in $x$) with probability $\rho_\delta(x)$, and is sent to the cemetery point $\partial$ otherwise. Note that $\partial$ is absorbing.

\begin{corollary}
	\label{cor:simpleexample}
	If there is a probability measure $\mu$ on $E$ such that 
	\[
	a:=\left\|\frac{\mathrm dR(x,\cdot)}{\mathrm d\mu}\right\|_\infty<\infty
	\]
	and if $\|\rho_\delta\|_\infty+\|\rho_\partial\|_\infty<1$, then~\eqref{eq:QSDdisctime} holds with $V\equiv 1$.
\end{corollary}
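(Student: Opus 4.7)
The plan is to combine Theorem~\ref{thm:QCbyDOM} with the lower bound on $r(P)$ from Remark~\ref{rem:phi}, and then invoke Theorem~\ref{thm:quasi-compact-and-QSD} with $V\equiv 1$. On $E$ the kernel of $X$ acts by $Pf(x)=\rho_R(x)Rf(x)+\rho_\delta(x)f(x)$, and $L^\infty(V)$ with $V\equiv 1$ is just the space of bounded measurable functions on $E$. Since $\rho_R,\rho_\delta\leq 1$ and $R$ is bounded on $L^\infty(E)$ (its density being essentially bounded against a probability measure), $P$ is a bounded positive operator on $L^\infty(E)$.

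First I would introduce the rank‑one operator $Kf:=a\,\mu(f)\,\mathbf 1_E$ and the multiplication operator $Sf(x):=\rho_\delta(x)f(x)$. Because $\rho_R\leq 1$ and the density of $R(x,\cdot)$ with respect to $\mu$ is uniformly bounded by $a$, for every $f\geq 0$
\[
0\leq \rho_R(x)Rf(x)=\rho_R(x)\int f(y)\frac{\mathrm d R(x,\cdot)}{\mathrm d\mu}(y)\,\mu(\mathrm dy)\leq a\,\mu(f)=Kf(x),
\]
so that $0\leq P\leq K+S$. Moreover $P-S=\rho_R\cdot R$ is itself a non‑negative kernel, so its positive part is automatically well defined and equal to $P-S$. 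As $K$ has rank one (hence is compact) on $L^\infty(E)$ and $\|S\|\leq\|\rho_\delta\|_\infty$, Theorem~\ref{thm:QCbyDOM} yields
\[
r_{ess}(P)\leq r(S)\leq \|\rho_\delta\|_\infty.
\]

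Next I would use $\varphi=\mathbf 1_E$ in Remark~\ref{rem:phi}: since $P\mathbf 1_E(x)=1-\rho_\partial(x)\geq 1-\|\rho_\partial\|_\infty$, the remark gives $r(P)\geq 1-\|\rho_\partial\|_\infty$. Combined with the previous step and the hypothesis $\|\rho_\delta\|_\infty+\|\rho_\partial\|_\infty<1$, this yields $r_{ess}(P)\leq\|\rho_\delta\|_\infty<1-\|\rho_\partial\|_\infty\leq r(P)$, so $P$ is quasi‑compact on $L^\infty(E)$.

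Finally, applying Theorem~\ref{thm:quasi-compact-and-QSD} to $P$ with $V\equiv 1$ and rewriting~\eqref{eq:etathm} as~\eqref{eq:QSDdisctime} concludes the proof. No real obstacle arises; the only point that deserves attention is the well‑definedness of $(P-S)_+$ required by Theorem~\ref{thm:QCbyDOM}, which here is trivial because the subtraction $P-S$ is already a non‑negative kernel thanks to the explicit description of $P$.
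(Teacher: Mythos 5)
Your proposal is correct and follows essentially the same route as the paper: the same decomposition $0\leq P\leq K+S$ with $Kf=a\,\mu(f)$ rank one and $S$ the multiplication by $\rho_\delta$, Theorem~\ref{thm:QCbyDOM} to bound $r_{ess}(P)$ by $\|\rho_\delta\|_\infty$, the lower bound $r(P)\geq 1-\|\rho_\partial\|_\infty$, and then Theorem~\ref{thm:quasi-compact-and-QSD}. Your explicit verification that $(P-S)_+$ is well defined (since $P-S=\rho_R\cdot R\geq 0$) and the use of Remark~\ref{rem:phi} with $\varphi=\mathbf 1_E$ are welcome details that the paper leaves implicit.
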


This example, despite its apparent simplicity, doesn't fit in existing framework for the existence of and convergence to a quasi-stationary distribution (apart when $E$ is countable). The main reasons are that the family of laws $\mathbb P_x(X_1\in\cdot)$, $x\in E$, is too degenerate to make use of~\cite{ChampagnatVillemonais2017a} (because of the $\delta_x$ is the definition) and too irregular to make use e.g. of~\cite{FerreRoussetEtAl2018,CastroLambEtAl2021,BenaimEtAl2022}  (since we do not make any assumption on the regularity of $R$ nor on the functions $\rho_R,\rho_\delta$ and $\rho_\partial$). 

\begin{proof}[Proof of Corollary~\ref{cor:simpleexample}]
	Let $P=P_1$ be the operator on $L^\infty(E)$ associated with $X$. Then we have
	$
	0\leq P\leq K+S,
	$
	where $Kf(x):=a\mu(f)$ and $Sf(x):=\rho_\delta(x) f(x)$ define positive operators such that $K$ is compact (since its range is one-dimensional) and $\rho(S)\leq\|\rho_\delta\|_\infty<1-\|\rho_\partial\|_\infty\leq r(P)$. We conclude from Theorem~\ref{thm:QCbyDOM} that $P$ is quasi-compact, and by Theorem~\ref{thm:quasi-compact-and-QSD} that ~\eqref{eq:QSDdisctime} holds.
\end{proof}

\subsection{Kernel with locally uniformly integrable densities}

\label{sec:exadensity}

In this section, we consider the situation where a continuous time absorbed process admits a locally uniformly integrable density. This type of properties is well known for many families of continuous time processes since the seminal work of Aronson~\cite{Aronson1968}, under mild regularity assumptions, see e.g. the recent works on kinetic stochastic differential equations~\cite{deRaynalEtAl2023,HouXicheng2024,RenXicheng2025} and references therein. We also refer the reader to~\cite{DebusscheFournier2013,MenozziZhang2022} and references therein for processes driven by $\alpha$-stable processes.

Let $(X_t)_{t\in[0,+\infty)}$ be a continuous time Markov process on a separable topological space $E\cup\{\partial\}$, with $E$ endowed with its Borel $\sigma$-field $\mathcal E$ and $\{\partial\}\not\subset E$ is measurable and absorbing subset. We denote the absorption time of $X$ by $\tau_\partial:=\inf\{t\geq 0,\ X_t=\partial\}$ and assume that, for all $x\in E$ and all $f\in L^\infty(E)$, $t\in[0,+\infty)\mapsto \mathbb E_x(f(X_t)\mathbf 1_{t<\tau_\partial})$ is measurable.

\begin{proposition}
	\label{prop:QSDdensity}
	Assume that there exists a measurable function $V:E\rightarrow (0,+\infty)$,
	$T>0$ and a constant 
	$C_T>0$ such that
	\[
	\mathbb E_x(V(X_t)\mathbf 1_{t<\tau_\partial})\leq C_T V(x),\quad \forall t\in[0,T],\ x\in E.
	\]
	Assume in addition that there exists $\theta_1<\theta'_1$, a non-negative non-zero
	$\varphi\in L^\infty(V)$ and a measurable $E_K\subset E$ such that
	\begin{align*}
		&\mathbb E_x(V(X_T)\mathbf 1_{T<\tau_\partial})\leq \theta_1 V(x),\ \forall x\in E\setminus E_K\\
		&\mathbb E_x(\varphi(X_T))\geq \theta'_1 \varphi(x),\ \forall x\in E.
	\end{align*}	
	If there exists a measure $\nu$ on $E$ and a measurable $p_T:E\times E\to [0,+\infty)$ such that, for all $x\in E_K$,
	\[
	\mathbb E_x(f(X_T)\mathbf 1_{T<\tau_\partial})=\int_E f(y) p_T(x,y)\,\nu(\mathrm dy)
	\]
	with one of the two following conditions:
	\begin{enumerate}[i)]
		\item we have $\theta_1 C_T<\theta'_1$,  $\nu(V\mathbf 1_{E_K})<\infty$ and 
		\[
		\sup_{x\in E_K} \frac1{V(x)}\int_{E_K} p_T(x,y)\mathbf 1_{p_T(x,y)>B}V(y)\,\nu(\mathrm dy)\xrightarrow[B\to+\infty]{} 0;
		\]
		\item for all $A>0$, $\nu(V\mathbf 1_{V\leq A})<\infty$ and
		\[
		\sup_{x\in E_K}  \frac1{V(x)}\int_{V\leq A} p_T(x,y)\mathbf 1_{p_T(x,y)>B}V(y)\,\nu(\mathrm dy)\xrightarrow[B\to+\infty]{} 0;
		\]
	\end{enumerate}
	then~\eqref{eq:QSDconttime} holds.

	If in addition, for all $x\in E_K$, $p_T(x,y)$ is positive $\nu(\mathrm dy\cap E_K)$-almost everywhere, then the cardinality of $I$ in~\eqref{eq:QSDconttime} is one and $\eta_1$ is positive on $E_K$. 
\end{proposition}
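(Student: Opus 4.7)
The plan is to apply Proposition~\ref{prop:density} to the operator $P:=P_T$ to obtain quasi-compactness on $L^\infty(V)$, and then to invoke Theorem~\ref{thm:generalcont} to translate the asymptotic statement~\eqref{eq:etathmcont} into~\eqref{eq:QSDconttime} for the absorbed process $X$.

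First, I would verify the hypotheses of Proposition~\ref{prop:density} for $P_T$. The Lyapunov assumption~(H1) is provided by the first inequality for $P_T$, provided $\theta_1<r(P_T)$. This latter bound follows from Remark~\ref{rem:phi} applied to $\varphi$: the assumption $P_T\varphi\geq \theta'_1 \varphi$ with $\varphi\geq 0$ non-zero and $\varphi\in L^\infty(V)$ gives $r(P_T)\geq \theta'_1>\theta_1$. The density representation on $E_K$ and the uniform integrability conditions~i) or~ii) are directly assumed. In case~i), the additional requirement $\theta_1 \VERT P_T\VERT<r(P_T)$ follows from the moment bound $\mathbb E_x(V(X_T)\mathbf 1_{T<\tau_\partial})\leq C_TV(x)$, which gives $\VERT P_T\VERT_V\leq C_T$, so that $\theta_1 \VERT P_T\VERT\leq \theta_1 C_T<\theta'_1\leq r(P_T)$. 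Proposition~\ref{prop:density} then yields the quasi-compactness of $P_T$ on $L^\infty(V)$.

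Second, the moment bound $P_tV\leq C_TV$ for $t\in[0,T]$ is assumed and the measurability in $t$ of $t\mapsto P_tf(x)$ for bounded measurable $f$ is part of the setup. The hypotheses of Theorem~\ref{thm:generalcont} are thus satisfied, and its conclusion~\eqref{eq:etathmcont}, expressed in probabilistic terms, is precisely~\eqref{eq:QSDconttime}.

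For the final statement, assume the positivity of $p_T(x,\cdot)$ on $E_K$. Let $(\eta_i,\nu_i,E_i,F_i)_{i\in I}$ be the objects provided by Theorem~\ref{thm:quasi-compact-and-QSD} applied to $P_T$, with $d=1$ thanks to Remark~\ref{rem:2notablefacts}. I would proceed in three steps. Step~(a): each $\nu_i$ satisfies $\nu_i(E_K)>0$. Indeed, $\nu_i P_T=r(P_T)\nu_i$ and $\nu_i(V)<\infty$, so if $\nu_i$ were supported in $E\setminus E_K$, integrating~(H1) against $\nu_i$ would give $r(P_T)\nu_i(V)=\nu_i(P_TV)\leq \theta_1\nu_i(V)$, contradicting $\theta_1<r(P_T)$. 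Step~(b): $E_K\subset F_i$ up to a $\nu$-null set for every $i\in I$. Since $F_i$ is closed for $P_T$, i.e.\ $P_T\mathbf 1_{F_i^c}(x)=0$ for all $x\in F_i$, and since $\nu_i(F_i\cap E_K)>0$ by Step~(a), we can pick $x\in F_i\cap E_K$ and use the density representation to obtain $0=P_T\mathbf 1_{F_i^c}(x)=\int_{F_i^c}p_T(x,y)\,\nu(\mathrm dy)$, forcing $\nu(F_i^c\cap E_K)=0$ by the positivity of $p_T(x,\cdot)$ on $E_K$. Step~(c): each $\eta_i$ is strictly positive on $E_K$. The Lyapunov drift gives, for $x\in E\setminus E_K$, $r(P_T)\eta_i(x)=P_T\eta_i(x)\leq \theta_1\|\eta_i\|_VV(x)$, so that $\eta_i(x)/V(x)\leq(\theta_1/r(P_T))\|\eta_i\|_V$; hence $\|\eta_i\|_V=\sup_{E_K}\eta_i/V$. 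If $\eta_i(x_0)=0$ for some $x_0\in E_K$, the density representation and the $\nu$-a.e.\ positivity of $p_T(x_0,\cdot)$ on $E_K$ force $\eta_i=0$ $\nu$-a.e.\ on $E_K$; iterating the eigenrelation and reusing the density on $E_K$ together with the Lyapunov control of $\int_{E_K^c}\eta_i(y)p_T(x,y)\,\nu(\mathrm dy)$ yields $\sup_{E_K}\eta_i/V=0$, hence $\eta_i\equiv 0$, a contradiction. Combined with the defining property $\eta_{i'}=0$ on $E_{i}$ for $i'\neq i$ and Step~(b) (which places a $\nu$-dense subset of $E_K$ inside every $F_i$), this strict positivity on $E_K$ forces $|I|=1$, and yields $\eta_1>0$ on $E_K$.

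The main obstacle will be the propagation argument in Step~(c): showing that $\eta_i$ vanishing at a single point of $E_K$ propagates to a global contradiction requires combining the integral representation on $E_K$, the Lyapunov bound outside $E_K$, and an iteration argument to absorb the factor $C_T/r(P_T)$ that is not a priori less than~$1$; this is where the strict inequality $\theta_1<\theta'_1\leq r(P_T)$ plays its decisive role.
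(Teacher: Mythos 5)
The first half of your argument (quasi-compactness of $P_T$ via Proposition~\ref{prop:density} and Remark~\ref{rem:phi}, then Theorem~\ref{thm:generalcont} to get~\eqref{eq:QSDconttime}) is exactly the paper's proof. Your Step~(a) is also the paper's central computation: integrating the drift against a left eigenmeasure and using $\nu_iP_T=r(P_T)\nu_i$ to force $r(P_T)\le\theta_1<\theta_1'$, a contradiction.

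Where you diverge is the route from there to $|I|=1$, and your Steps~(b)--(c) have two genuine gaps. First, Step~(c) repeatedly uses the pointwise eigenrelation $P_T\eta_i=r(P_T)\eta_i$ on all of $E$ (to show $\|\eta_i\|_V=\sup_{E_K}\eta_i/V$ and to propagate a zero of $\eta_i$). Theorem~\ref{thm:quasi-compact-and-QSD} does not assert that the $\eta_i$ are eigenfunctions: the polynomial correction $(dn+k)^{j(x)}$ in~\eqref{eq:etathm} reflects possible generalized eigenfunctions, and the eigenrelation can only be extracted on the sets $E_i$ themselves (via~\eqref{eq:etajdecomp}), not at an arbitrary $x_0\in E_K$ or on $E\setminus E_K$. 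Second, your concluding step needs some $E_i$ to intersect $E_K$ so that ``$\eta_{i'}=0$ on $E_i$'' clashes with ``$\eta_{i'}>0$ on $E_K$''; but Steps~(a)--(b) only give $\nu_i(E_K)>0$ and $E_K\subset F_i$ modulo $\nu$-null sets, and $F_i$ is strictly larger than $E_i$, so this is not established. The paper avoids both issues by arguing on the measures only: the $\nu$-a.e.\ positivity of $p_T(x,\cdot)$ on $E_K$ for every $x\in E_K$ forces $E_K$ to lie in a single class $E_{i_0}$; then for $i\ne i_0$ the eigenmeasure $\nu_i$ lives entirely in the drift region $E\setminus E_K$, and the Step~(a) computation kills it. I would recommend replacing Steps~(b)--(c) by this support/classification argument, which uses only properties actually guaranteed by Theorem~\ref{thm:quasi-compact-and-QSD} (disjointness and closedness of the classes, support of $\nu_i$, and the eigenmeasure relation).
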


\begin{remark}
	As will be clear in the proof, the condition that there exists $\varphi$ and $\theta'_1$ such that $\mathbb E_x(\varphi(X_T))\geq \theta'_1 \varphi(x)$ for all $x\in E$ can be removed if one replaces $\theta'_1$ by $r(P_T)$ in the above statement. Similarly, $\theta_1 C_T<\theta'_1$ can be replaced by $\theta_1 \VERT P_T\VERT < r(P_T)$.
\end{remark}

\begin{proof}[Proof of Proposition~\ref{prop:QSDdensity}]
	Let $P_T$ be the kernel associated to $X_T$, which defines a bounded linear operator on $L^\infty(V)$ with $C_T\geq \VERT P_T\VERT $ since $P_T V\leq C_T V$.
	Following Remark~\ref{rem:phi}, we also have $r(P_T)\geq \theta'_1$. 
	
	We deduce that $P_T$ satisfies Assumption~(H1) and Conditions {\em i)} or {\em ii} in Proposition~\ref{prop:density}. In particular $P_T$ is quasi-compact and we deduce from Theorem~\ref{thm:generalcont} that~\eqref{eq:QSDconttime} holds.
	
	Assume in addition that, for all $x\in E_K$, $p_T(x,y)$ is positive $\nu(\mathrm dy\cap E_K)$-almost everywhere. According to Theorem~\ref{thm:quasi-compact-and-QSD} and Remark~\ref{rem:2notablefacts}, each $E_i$, $i\in I$, is closed for $P_T$, so that there exists $i_0\in I$ such $E_K\subset E_{i_0}$. In particular, for all $i\in I\setminus\{i_0\}$,
	\[
	\mathbb E_x(V(X_T)\mathbf 1_{T<\tau_\partial})\leq \theta_1 V(x),\quad\forall x\in E_i.
	\]
	Integrating with respect to $\nu_i$, and using also the fact that $\nu_iP_T=r(P_T)\nu_i$, we deduce that 
	\[
	r(P_T)\nu_i(V)=\mathbb E_{\nu_i}(V(X_T)\mathbf 1_{T<\tau_\partial})\leq \theta_1 \nu_i(V),\quad\forall x\in E_i,
	\]
	and hence that $r(P_T)\leq \theta_1$. Since $\theta_1<\theta'_1\leq r(P_T)$, we deduce that $I\setminus\{i_0\}$ is empty, which concludes the proof.
\end{proof}

\subsection{Sub-Markov processes dominated by a strong Feller kernel}

\label{sec:exaFeller}

In this section, we consider the situation where a continuous time absorbed process is dominated by a strong Feller semigroup. 

Let $E$ be a separable metric space endowed with its Borel $\sigma$-field and let $(X_t)_{t\in [0,+\infty)}$ be as in the previous section. We have the following result, whose proof is similar to the proof of Proposition~\ref{prop:QSDdensity}, using Corollary~\ref{cor:loc1} instead of Corollary~\ref{cor:localdomqsdbis}, and is thus omitted.

\begin{proposition}
	\label{prop:strongFeller}
		Assume that there exists a measurable function $V:E\rightarrow (0,+\infty)$,
	$T>0$ and a constant 
	$C_T>0$ such that
	\[
	\mathbb E_x(V(X_t)\mathbf 1_{t<\tau_\partial})\leq C_T V(x),\quad \forall t\in[0,T],\ x\in E.
	\]
	Assume in addition that there exists $\theta_1<\theta'_1$, a non-negative non-zero
	$\varphi\in L^\infty(V)$ and a measurable $E_K\subset E$ such that
	\begin{align*}
		&\mathbb E_x(V(X_T)\mathbf 1_{T<\tau_\partial})\leq \theta_1 V(x),\ \forall x\in E\setminus E_K\\
		&\mathbb E_x(\varphi(X_T))\geq \theta'_1 \varphi(x),\ \forall x\in E.
	\end{align*}
	Assume in addition one of the following
	\begin{enumerate}[i)]
		\item $E_K$ is relatively compact, and there exists $\theta_3>0$ and a strong Feller kernel $G$ on $E_K$ such that
		\begin{align*}
			\mathbb E_x(f(X_T)\mathbf 1_{X_T\in E_K}\mathbf 1_{T<\tau_\partial})\leq G(f\mathbf 1_{E_K})(x)+\theta_3 \|f\|_V\,V(x),\ \forall x\in E_K,\ \forall f\in L^\infty(V),
		\end{align*}
		and with $(\theta_3+\theta_1)(C_T^2+\VERT G\VERT \, C_T)<(\theta'_1)^3$;
		\item for all $A>0$ $\{V\leq A\}$ is relatively compact, and we have $\sup_{E_K} P(V\mathbf 1_{V>A})\to 0$ when $A\to+\infty$, and there exists a  strong Feller kernel $G$ on $E$ such that, for some $\theta_4<\theta'_1$,
		\begin{align*}
			\mathbb E_x(f(X_T)\mathbf 1_{T<\tau_\partial})\leq Gf(x)+\theta_4 \|f\|_V\,V(x),\ \forall x\in E_K,\ \forall f\in L^\infty(V);
		\end{align*}
	\end{enumerate}
	Then~\eqref{eq:QSDconttime} holds.
\end{proposition}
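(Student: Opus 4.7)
The plan is to replay the proof of Proposition~\ref{prop:QSDdensity} with the local compactness at time $T$ now produced by the strong Feller kernel $G$ in place of the density $p_T$. I would first set $P:=P_T$, view it as a bounded operator on $L^\infty(V)$ with $\VERT P\VERT \leq C_T$ thanks to $P_TV\leq C_TV$, and apply Remark~\ref{rem:phi} to $\varphi$ to deduce $r(P)\geq \theta'_1>\theta_1$. The drift assumption on $E\setminus E_K$ then gives Assumption~(H1) with constant $\theta_1$.

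In case~(i), I would apply Corollary~\ref{cor:loc1} with $k=2$. Following Remark~\ref{rem:strongFeller}, the extension $\bar G f:=\mathbf 1_{E_K} G(\mathbf 1_{E_K}f)$ to $L^\infty(V)$ satisfies $\bar G^2$ compact, because $G$ strong Feller makes $G^2$ ultra-Feller on $E_K$. The domination hypothesis of~(i) is precisely~\eqref{eq:cor:loc1} with constant $\theta_3$. Plugging $\VERT P\VERT \leq C_T$, $\VERT P^2\VERT \leq C_T^2$ and $r(P)\geq \theta'_1$, the condition $\theta<r(P)^{k+1}$ of Corollary~\ref{cor:loc1} becomes exactly $(\theta_3+\theta_1)(C_T^2+\VERT G\VERT C_T)<(\theta'_1)^3$, which is the standing hypothesis, and quasi-compactness of $P_T$ on $L^\infty(V)$ follows.

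In case~(ii), I would use Corollary~\ref{cor:localdomqsdbis} instead. The tail assumption $\sup_{E_K}P(V\mathbf 1_{V>A})/V\to 0$ together with $\theta_4<\theta'_1\leq r(P)$ allows one to choose $A$ large enough so that $\theta_2:=\sup_{E_K}\frac{1}{V}P(V\mathbf 1_{V>A})+\theta_4<r(P)$. For this $A$, composing the strong Feller kernel $G$ with the restriction $f\mapsto f\mathbf 1_{V\leq A}$ and embedding the image into $L^\infty(V_{\vert E_K})$ produces, via the same ultra-Feller argument as in case~(i), a compact operator $K_A:L^\infty(V_{\vert \{V\leq A\}})\to L^\infty(V_{\vert E_K})$ satisfying~\eqref{eq:cor:local2} with constant $\theta_4$, and Corollary~\ref{cor:localdomqsdbis} then gives quasi-compactness of $P_T$.

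In both cases, once $P_T$ is known to be quasi-compact on $L^\infty(V)$, Theorem~\ref{thm:generalcont} applies via the standing measurability of $t\mapsto P_tf$ and the bound $P_tV\leq C_TV$ for $t\in[0,T]$, and its conclusion is exactly~\eqref{eq:QSDconttime}. The main delicate point, as in Proposition~\ref{prop:QSDdensity}, is the verification of the ultra-Feller compactness of the auxiliary operators built from $G$ and the careful bookkeeping of the spectral estimates; no new analytic difficulty appears beyond what is already encapsulated in Remark~\ref{rem:strongFeller} and the proofs of Corollaries~\ref{cor:loc1} and~\ref{cor:localdomqsdbis}.
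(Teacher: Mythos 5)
Your treatment of case \emph{i)} is correct and is exactly the route the paper intends: set $P=P_T$, obtain $r(P)\geq\theta'_1$ from Remark~\ref{rem:phi}, read hypothesis \emph{i)} as the domination~\eqref{eq:cor:loc1} for the extension $\bar Gf:=\mathbf 1_{E_K}G(\mathbf 1_{E_K}f)$, get compactness of $\bar G^2$ from the strong Feller $\Rightarrow$ ultra-Feller argument of Remark~\ref{rem:strongFeller}, check the numerical condition of Corollary~\ref{cor:loc1} with $k=2$ using $\vertiii{P}\leq C_T$, and conclude with Theorem~\ref{thm:generalcont}. (As in Remark~\ref{rem:strongFeller}, the compactness step silently uses that $E_K$ is relatively compact with $\sup_{E_K}V<\infty$; this should be made explicit, but it is an implicit hypothesis of the proposition itself rather than a defect of your argument.)

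Case \emph{ii)}, however, contains a genuine gap. You define $K_Af:=G(f\mathbf 1_{V\leq A})_{\vert E_K}$ and assert that it is compact ``via the same ultra-Feller argument as in case \emph{i)}''. That argument does not apply to a single kernel: strong Feller regularity only guarantees that the \emph{composition} of two strong Feller kernels is ultra-Feller, hence that $G^2$ (not $G$) is compact on a suitable relatively compact set. A single strong Feller kernel maps bounded measurable functions to continuous ones, but its transition measures need not be continuous in total variation, so the image of the unit ball of $L^\infty(V_{\vert \{V\leq A\}})$ under $K_A$ need not be equicontinuous and Corollary~\ref{cor:localdomqsdbis} cannot be invoked with this $K_A$. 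This is precisely why the paper's indication is to use Corollary~\ref{cor:loc1} (which only requires $G^k$ compact for some $k$) \emph{instead of} Corollary~\ref{cor:localdomqsdbis} (which requires $K_A$ itself compact). The repair is to iterate the kernel: dominate $P^2$ on $E_K$ by an operator built from $G\circ G$, absorbing at each step the tail contribution $P(V\mathbf 1_{V>A})$ and the excursions through $E\setminus E_K$ via (H1), so that the compact part is a genuine product of two strong Feller kernels; one must then redo the bookkeeping to check that the resulting constant stays below $r(P)^{2}$ using $\theta_4<\theta'_1$ and the tail condition. As written, your case \emph{ii)} does not go through.
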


\begin{remark}
As in the conclusion of Proposition~\ref{prop:QSDdensity}, if irreducibility conditions are added to the assumption of this result, then one can conclude that the cardinality of $I$ in~\eqref{eq:QSDconttime} is one and $\eta_1$ is positive on $E_K$. 
\end{remark}

\begin{remark}
In~\cite{FerreRoussetEtAl2018}, the authors show that discrete time strong Feller non-con\-servative processes satisfying Lyapunov type conditions entail that the operator is quasi-compact. More recently, in~\cite{GuillinNectouxEtAl2024}, the authors proved using a different approach that this also holds for continuous time strong Feller sub-Markov processes, under the condition that they are obtained from càdlàg strong Feller Markov processes killed at the boundary of an open set. Proposition~\ref{prop:QSDdensity} {\em ii)} with $\theta_4=0$ generalizes this result by showing that it is actually sufficient that the sub-Markov process is dominated by a strong Feller kernel to obtain quasi-compactness.
\end{remark}

\end{document}